

\documentclass[a4paper,12pt]{article}

\usepackage[mathscr]{eucal}

\usepackage{amscd}
\usepackage{amssymb,amsfonts,amsmath,amsthm}
\usepackage{verbatim}

\usepackage{amsmath}
\usepackage{amsthm}
\usepackage{amssymb}

\usepackage{latexsym}
\usepackage{array}
\usepackage{enumerate}

\pagestyle{plain}

\setlength{\topmargin}{-45pt}
\setlength{\oddsidemargin}{0cm}
\setlength{\evensidemargin}{0cm}
\setlength{\textheight}{23.7cm}
\setlength{\textwidth}{16cm}

\numberwithin{equation}{section}


\newcommand{\CC}{\mathbb{C}}

\newcommand{\QQ}{\mathbb{Q}}
\newcommand{\ZZ}{\mathbb{Z}}
\newcommand{\D}{\mathcal{D}}
\newcommand{\E}{\mathcal{E}}

\newcommand{\G}{\mathcal{G}}
\newcommand{\HH}{\mathcal{H}}
\newcommand{\II}{\mathcal{J}}
\newcommand{\LL}{\mathcal{L}}
\newcommand{\M}{\mathcal{M}}
\newcommand{\N}{\mathcal{N}}
\newcommand{\K}{\mathcal{K}}
\newcommand{\sho}{\mathcal{O}}
\newcommand{\R}{\mathcal{R}}
\newcommand{\SH}{\mathcal{S}}

\newcommand{\bfD}{\mathbf{D}}

\renewcommand{\dim}{{\rm dim}}

\newcommand{\e}{\varepsilon}

\newcommand{\simto}{\overset{\sim}{\longrightarrow}}

\newcommand{\fin}{\hspace*{\fill}$\Box$\vspace*{2mm}}



\newtheorem{theorem}{Theorem}[section]

\newtheorem{corollary}[theorem]{Corollary}
\newtheorem{lemma}[theorem]{Lemma}
\newtheorem{proposition}[theorem]{Proposition}

\theoremstyle{definition}
\newtheorem{definition}[theorem]{Definition}
\theoremstyle{remark}
\newtheorem{remark}[theorem]{\sc Remark}







\title{On a Bernstein-Sato polynomial 
of a meromorphic function 
\footnote{{\bf 2020 Mathematics 
Subject Classification: }14F10, 14F18, 
32C38, 32S40, {\bf Keywords: }
Bernstein-Sato polynomials, meromorphic 
functions, monodromy, multiplier ideal sheaves}
\\ \small To the memory of Professor Hikosaburo Komatsu}

\author{Kiyoshi TAKEUCHI 
\footnote{Mathematical Institute, Tohoku University, 
Aramaki Aza-Aoba 6-3, Aobaku, Sendai, 980-8578, Japan. 
E-mail: takemicro@nifty.com} 
}

\date{}

\sloppy

\begin{document}

\maketitle

\begin{abstract}
We define Bernstein-Sato 
polynomials for meromorphic functions 
and study their basic properties. In particular, 
we prove a Kashiwara-Malgrange type 
theorem on their geometric monodromies, which would 
be useful also in relation with the monodromy conjecture. 
A new feature in the meromorphic setting is that we have 
several b-functions whose roots yield the same 
set of the eigenvalues of the Milnor monodromies. We introduce 
also multiplier ideal sheaves for meromorphic functions 
and show that their jumping numbers are 
related to our b-functions. 
\end{abstract}

\maketitle

\section{Introduction}\label{sec:s1}

The theory of b-functions initiated by 
Bernstein and Sato independently is 
certainly on a crossroad of various branches  
of mathematics, such as generalized functions, 
singularity theory, prehomogenous vector 
spaces, D-modules, number theory, 
algebraic geometry, computer algebra and so on. 
We often call them 
Bernstein-Sato polynomials. 
To see the breadth of their influence to mathematics, 
we can now consult for example the excellent 
survey articles by \cite{A-J-N} and \cite{Budur}. 

\par
 Let us briefly recall the definitions of 
classical Bernstein-Sato polynomials 
and some related results. For this purpose, 
let $X$ be a complex manifold and 
$\sho_X$ the sheaf of holomorphic functions 
on it. Denote by $\D_X$ the sheaf of 
differential operators with holomorphic 
coefficients on $X$. Let $f \in \sho_X$ be a holomorphic 
function defined on a neighborhood of 
a point $x_0 \in X$ such that $f(x_0)=0$. Then the (local) 
Bernstein-Sato polynomial $b_f(s) \in \CC [s]$ 
of $f$ (at $x_0 \in X$) is the non-zero 
polynomial $b(s) \not= 0$ of the lowest 
degree satisfying the equation 
\begin{equation}
b(s) f^{s} = P(s) f^{s+1} 
\end{equation}
for some $P(s) \in \D_X [s]$. 
In the algebraic and analytic cases, 
the existence of such $b(s) \not= 0$ 
was proved by Bernstein and Bj\"ork 
respectively. Then  
Kashiwara \cite{K-1} proved that the roots of 
the Bernstein-Sato polynomial 
$b_f(s)$ are negative rational numbers. 
One of the most strinking results on $b_f(x)$ 
is the Kashiwara-Malgrange theorem 
in \cite{K-2} and \cite{M}, which 
asserts that the set of the eigenvalues 
of the local (Milnor) monodromies of 
$f$ at various points $x \in f^{-1}(0)$ 
close to $x_0 \in f^{-1}(0)$ is equal to 
the one $\{ \exp (2 \pi i \alpha ) \ | \ 
\alpha \in (b_f)^{-1}(0) \}$. Motivated by it, 
Denef and Loeser fomulated their celebrated monodromy 
conjecture in \cite{D-L}. 
Later in \cite{Sabbah-1} and \cite{Sabbah-2}, Sabbah 
developed a theory of b-functions of 
several variables. More precisely, he considered 
several holomorphic functions 
$f_1, f_2, \ldots, f_k \in \sho_X$ 
($k \geq 1$) and proved the existence 
of a non-zero polynomial 
$b(s) \in \CC [s]= \CC [s_1,s_2, \ldots, s_k]$ of 
$k$ variables $s=(s_1, s_2, \ldots, s_k)$ 
satisfying the equation 
\begin{equation}
b(s) \Bigl( \prod_{i=1}^k f_i^{s_i} \Bigr) 
= P(s) \Bigl( \prod_{i=1}^k f_i^{s_i+1} \Bigr) 
\end{equation}
for some $P(s) \in \D_X [s] = 
\D_X [s_1,s_2, \ldots, s_k]$. See also 
Gyoja \cite{G} for a different proof 
and some additional results. 
The non-zero ideal $I \subset \CC [s]$ thus 
obtained is now called the Bernstein-Sato 
ideal of $f=(f_1, f_2, \ldots, f_k)$. The 
geometric meaning of this $I$ was clarified 
only recently in 
Budur-van der Veer-Wu-Zhou \cite{B-V-W-Z}. 
Moreover by Budur-Mustata-Saito \cite{B-M-S}, 
the theory of b-functios has been also generalized 
to higher-codimensional subvarieties, i.e to 
arbitrary ideals $\II \subset \sho_X$ of $\sho_X$. 
Their b-functions are related to the monodromies of 
the Verdier specializations along $\II$. 
See \cite{Budur} and \cite{B-M-S} for the details. 

\par 
The aim of this short note is to define Bernstein-Sato 
polynomials for meromorphic functions 
and study their basic properties. 
For two holomorphic functions 
$F, G \in \sho_X$ such that $F \not= 0$, 
$G \not= 0$ defined on a neighborhood of 
a point $x_0 \in X$ and coprime to each other 
such that $F(x_0)=0$, 
let us consider the meromorphic function 
\begin{equation}
f(x)= \frac{F(x)}{G(x)}
\end{equation}
associated to them. Let 
$D=F^{-1}(0) \cup G^{-1}(0) \subset X$ 
be the divisor defined by $F \cdot G \in \sho_X$ and 
\begin{equation}
\sho_X \Bigl[ \frac{1}{FG} \Bigr] 
= \Bigl\{ \frac{h}{(FG)^l} \ | \ 
h \in \sho_X, l \geq 0 \Bigr\}
\end{equation} 
the localization of $\sho_X$ along $D \subset X$. 
Recall that this sheaf is endowed with the 
structure of a left $\D_X$-module. Then the 
polynomial ring $( \sho_X [ \frac{1}{FG} ] )[s]$ 
over it is naturally a left $\D_X [s]$-module. 
As in the classical case where $G=1$ and $f$ is 
holomorphic, on the rank-one free module 
\begin{equation}
\LL := \Bigl( \sho_X \Bigl[ \frac{1}{FG} \Bigr] 
\Bigr) [s] f^s 
 \simeq  \Bigl( \sho_X \Bigl[ \frac{1}{FG} \Bigr] 
\Bigr) [s]
\end{equation}
over it, we define naturally a structure of 
a left $\D_X [s]$-module and can consider 
its $\D_X [s]$-submodule $\D_X [s] f^s 
\subset \LL$ generated by $f^s \in \LL$. 
However, in order to prove a Kashiwara-Malgrange type 
theorem (see Theorem \ref{Main-2} below) 
for b-functions on the geometric 
monodromies of $f$ in our meromorphic setting, 
we have to consider also other types of 
$\D_X [s]$-submodules of $\LL$. Considering 
\begin{equation}
\frac{1}{G^m} f^{s+k} = 
\frac{f^k}{G^m} \cdot f^{s} \in \LL 
\end{equation}
for various integers $m \geq 0$ and $k \geq 0$, 
we obtain the following result. 

\begin{theorem}\label{Main-1} 
Let $m \geq 0$ be a non-negative integer. Then 
there exists a non-zero polynomial $b(s) \in \CC [s]$ 
such that 
\begin{equation}\label{fe-1}
b(s) \Bigl( \frac{1}{G^m} f^{s} \Bigr) \in 
\sum_{k=1}^{+ \infty} \D_X [s]
\Bigl( \frac{1}{G^m} f^{s+k} \Bigr) 
\end{equation} 
i.e. there exist 
$P_1(s), P_2(s), \ldots, P_N(s) \in \D_X [s]$ 
for which we have 
\begin{equation}
b(s) \Bigl( \frac{1}{G^m} f^{s} \Bigr) = 
\sum_{k=1}^{N} P_k(s) 
\Bigl( \frac{1}{G^m} f^{s+k} \Bigr). 
\end{equation}
\end{theorem}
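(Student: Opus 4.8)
The plan is to reduce the statement to the classical multi-variable b-function result of Sabbah (equation (1.2) in the excerpt) applied to the pair of holomorphic functions $F$ and $G$, and then specialize the resulting functional equation. More precisely, I would begin by writing $\frac{1}{G^m}f^s = F^s G^{-s-m}$ as a section of a suitable free module and note that, formally, we want to work with the symbol $F^{s_1}G^{s_2}$ in two independent variables $s_1, s_2$ and then perform the substitution $s_1 = s$, $s_2 = -s-m$. The submodule $\sum_{k \geq 1}\D_X[s](\frac{1}{G^m}f^{s+k})$ corresponds, under this substitution, to $\sum_{k\geq 1}\D_X[s](F^{s+k}G^{-s-m-k}) = \D_X[s]\cdot F\cdot(F^s G^{-s-m-1})\cdot(\text{lower order in }F)$, i.e. it is generated by shifting $s_1 \mapsto s_1+1$ while simultaneously shifting $s_2 \mapsto s_2 - 1$ along the diagonal.

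The key steps, in order, are: (1) Apply Sabbah's theorem to $f_1 = F$, $f_2 = G$ on a neighborhood of $x_0$ to obtain a non-zero $b(s_1,s_2) \in \CC[s_1,s_2]$ and $P(s_1,s_2)\in\D_X[s_1,s_2]$ with $b(s_1,s_2)F^{s_1}G^{s_2} = P(s_1,s_2)F^{s_1+1}G^{s_2+1}$. (2) Iterate this relation: by applying it repeatedly and collecting terms, derive for each $k \geq 1$ a functional equation of the form $\bigl(\prod_{j=0}^{k-1}b(s_1+j,s_2+j)\bigr)F^{s_1}G^{s_2} = P_k(s_1,s_2)F^{s_1+k}G^{s_2+k}$ for suitable $P_k \in \D_X[s_1,s_2]$. (3) In this last equation, we actually only need $k=1$ but with the right specialization: substitute $s_1 = s$, $s_2 = -s-m-1$ (note $s_2+1 = -s-m$), which turns $F^{s_1}G^{s_2} = F^s G^{-s-m-1} = \frac{1}{G}\cdot\frac{1}{G^m}f^s$ — so to land exactly on $\frac{1}{G^m}f^s$ I instead substitute $s_1 = s$, $s_2 = -s-m$ directly into step (1), obtaining $b(s,-s-m)\,\frac{1}{G^m}f^s = \tilde P(s)\bigl(F^{s+1}G^{-s-m+1}\bigr) = \tilde P(s)\bigl(\frac{G^2}{F}\cdot\frac{1}{G^m}f^{s+1}\bigr)$; since $\frac{G^2}{F}$ is not a section of $\sho_X$, one must instead iterate enough times (step (2) with $k$ large) so that the power of $G$ produced is non-negative. (4) Choose $k = m+1$ (or larger): then $s_2 + k = -s-m + (m+1) = -s+1$, wait — one should take $k$ so that after substitution $s_1 = s$, $s_2 = -s-m$ the exponent of $G$ on the right, namely $s_2 + k = -s-m+k$, pairs with $F^{s_1+k} = F^{s+k}$ to give $F^{s+k}G^{-s-m+k} = F^{-m+k}\cdot F^s G^{-s-m}\cdot$ hmm; the clean bookkeeping is: $F^{s+k}G^{s_2+k}$ with $s_2 = -s-m$ equals $F^{k}G^{k}\cdot F^s G^{-s-m} = (FG)^k\cdot\frac{1}{G^m}f^s$ is wrong too. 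The correct identity is $F^{s+k}G^{-s-m+k} = F^k G^k \cdot \frac{f^s}{G^m}$ only if $-s-m+k = -s-m$, false. So one writes $F^{s+k}G^{-s-m} = F^k\cdot\frac{1}{G^m}f^{s+k}\cdot G^{-k}\cdot G^k$... The clean route: from step (2), $\bigl(\prod_{j=0}^{k-1}b(s+j,-s-m+j)\bigr)\frac{1}{G^m}f^s = P_k(s,-s-m)\bigl(F^{s+k}G^{-s-m+k}\bigr)$, and $F^{s+k}G^{-s-m+k} = G^k\cdot F^{s+k}G^{-s-m} \cdot G^{-s-m}/G^{-s-m}$; since $F^{s+k}G^{-(s+k)-m} = \frac{1}{G^m}f^{s+k}$, we get $F^{s+k}G^{-s-m+k} = G^{2k}\cdot\frac{1}{G^m}f^{s+k}$, which IS a legitimate element of $\D_X[s](\frac{1}{G^m}f^{s+k})$ since $G^{2k}\in\sho_X \subset \D_X$.

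Thus setting $b(s) := \prod_{j=0}^{k-1}b(s+j,-s-m+j)$ for any single fixed $k\geq 1$ (say $k=1$), and $P_k(s) := G^{2k}\cdot P_k(s,-s-m) \in \D_X[s]$, gives exactly \eqref{fe-1} with $N = k$. Finally I would check that $b(s)\not\equiv 0$: since $b(s_1,s_2)$ is a non-zero polynomial, its restriction to the line $\{(s+j,-s-m+j)\}$ could a priori vanish, but Sabbah's (or Kashiwara's) structure theorem guarantees the roots of $b(s_1,s_2)$ lie on a finite union of hyperplanes of a special form, and one argues these cannot all contain a line of slope $-1$; alternatively, it suffices to remark that we may always multiply the original two-variable $b$ by a further factor making its diagonal restriction non-trivial, or simply pass to $G = 1$ locally away from $G^{-1}(0)$ to see non-triviality generically — I expect the cleanest argument is that $b(s_1,s_2)$ can be taken to have a factor depending only on $s_1 + s_2$ is false, so instead: the main obstacle, and the only non-formal point, is precisely ensuring $b(s,-s-m)\not\equiv 0$, which I would handle by invoking the known form of the zero locus of the Bernstein–Sato ideal of $(F,G)$ from \cite{B-V-W-Z} (a finite union of affine hyperplanes $a_1 s_1 + a_2 s_2 + b = 0$ with $a_i \in \ZZ_{\geq 0}$ not both zero), so that the line $s_2 = -s-m$ is contained in such a hyperplane only if $a_1 = a_2$, and one can always enlarge the generator $b$ to avoid this degenerate case; the rest of the proof is the elementary iteration and substitution described above.
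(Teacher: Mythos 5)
Your overall strategy --- apply Sabbah's two-variable functional equation to the pair $(F,G)$ and specialize along the line $s_1=s$, $s_2=-s-m$ --- is precisely the route the paper sketches in its introduction (with the substitution $s_2=-s-m-2$) and then deliberately abandons, because the single non-formal point you flag at the end is a genuine gap that your proposal does not close. The formal bookkeeping in your steps (1)--(4) is fine: the identity $F^{s+k}G^{-s-m+k}=G^{2k}\cdot\frac{1}{G^m}f^{s+k}$ is correct, and membership in $\D_X[s]\bigl(\frac{1}{G^m}f^{s+1}\bigr)$ implies \eqref{fe-1}. But everything hinges on $b(s,-s-m)\not\equiv 0$. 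By \cite{B-V-W-Z} the zero locus $Z(I)$ of the Bernstein--Sato ideal of $(F,G)$ is a finite union of hyperplanes $a_1s_1+a_2s_2+b=0$ with $a_1,a_2\in\ZZ_{\geq0}$ not both zero and $b>0$, and an element of $I$ restricting to a non-zero polynomial on the line $L_m=\{s_1+s_2+m=0\}$ exists if and only if $L_m\not\subset Z(I)$. For $m=0$ this holds automatically, since $b>0$ forbids the hyperplane $s_1+s_2=0$. For $m\geq1$, however, the hyperplane $s_1+s_2+m=0$ is of the allowed shape ($a_1=a_2=1$, $b=m>0$) and nothing in your argument excludes it from $Z(I)$; moreover ``enlarging the generator'' cannot help, because if $L_m\subset Z(I)$ then \emph{every} element of the ideal vanishes identically on $L_m$. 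So as written your proof is complete only for $m=0$, and it invokes a much heavier theorem than is needed.

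The paper sidesteps this entirely. It realizes the modules $\sum_{k\geq j}\D_X[s]\bigl(\frac{1}{G^m}f^{s+k}\bigr)$ ($j=0,1$) as $V_{-j}(\D_X\otimes_{\CC_X}\CC_X[t,\partial_t])\,\sigma_m$ for the section $\sigma_m=\frac{1}{G^m}\delta(t-f)$ of the regular holonomic $\D_{X\times\CC}$-module $\M\simeq\sho_{X\times\CC}[\frac{1}{(tG-F)G}]/\sho_{X\times\CC}[\frac{1}{G}]$, obtains the non-zero $b(s)$ from the specializability of $\M$ along $\{t=0\}$, and descends the resulting relation back to the $\CC[t,\partial_t]$-level by a faithful-flatness argument (Lemma \ref{Lemma-1}). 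To repair your approach you would need either to prove $L_m\not\subset Z(I)$ for every $m\geq0$, or to fall back on a $V$-filtration argument of exactly this kind.
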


Although the proof of Theorem \ref{Main-1} 
relies on the classical theory of Kashiwara 
and Malgrange, we need some new ideas to formulate 
and prove it. See Section \ref{sec:s2} 
for the details.  This could be the reason 
why Bernstein-Sato polynomials for meromorphic functions 
were not defined nor studied before. 

\begin{definition}\label{BS-fun}
For $m \geq 0$ we denote by $b_{f,m}^{{\rm mero}}(s) 
\in \CC [s]$ the minimal polynomial (i.e. the 
non-zero polynomial of the lowest degree) 
satisfying the equation in Theorem \ref{Main-1} 
and call it the Bernstein-Sato polynomial 
or the b-function of $f$ of order $m$. 
\end{definition}

By a theorem of Sabbah \cite{Sabbah-1} there 
exists a non-zero polynomial $b(s_1,s_2) \not= 0$ of 
two variables $s_1, s_2$ such that 
\begin{equation}
b(s_1,s_2) F^{s_1}G^{s_2} = 
P(s_1,s_2) F^{s_1+1}G^{s_2+1}
\end{equation}
for some $P(s_1,s_2) \in \D_X [s_1,s_2]$. 
Then by setting $s_1=s$ and  $s_2=-s-m-2$ we obtain 
the desired condition 
\begin{equation}
b(s, -s-m-2) \Bigl( \frac{1}{G^m} f^{s} \Bigr) = 
G^2 P(s, -s-m-2)
\Bigl( \frac{1}{G^m} f^{s+1} \Bigr). 
\end{equation}
This important remark is due to Oaku. 
However, for the given $F(x), G(x) \in \sho_X$ 
it would not be so easy to verify that 
the polynomial $b(s, -s-m-2) \in \CC [s]$ 
of $s$ thus obtained is non-zero. Recall 
that by Bahloul \cite{B-1}, \cite{B}, Bahloul-Oaku 
\cite{B-O}, Oaku-Takayama \cite{O-T} and 
Ucha-Castro \cite{U-C} 
we have algorithms to compute 
the Bernstein-Sato ideal $I \subset \CC [s_1,s_2]$ 
at least when $F$ and $G$ are polynomials. Motivated 
by this observation, instead of 
the equation \eqref{fe-1} 
one may also consider the simpler one 
\begin{equation}\label{fe-2}
b(s) \Bigl( \frac{1}{G^m} f^{s} \Bigr) \in 
\D_X [s] \Bigl( \frac{1}{G^m} f^{s+1} \Bigr). 
\end{equation}
Then of course, the minimal polynomial 
$b(s) \not= 0$ satisfying it is divided by 
our b-function $b_{f,m}^{{\rm mero}}(s)$, 
but from the proof of Theorem \ref{Main-2} below 
it looks that we do not have a 
Kashiwara-Malgrange type result as in it  
by this simpler 
definition of b-functions. This explains 
the reason why the right hand side of 
the equation \eqref{fe-1} is not so simple. 
Note also that if $G=1$ and $f= \frac{F}{G}=F$ is holomorphic 
we have $f \in \sho_X \subset \D_X$ and for any $m \geq 0$ 
our b-function $b_{f,m}^{{\rm mero}}(s)$ coincides 
with the classical one $b_f(s) \in \CC [s]$ 
introduced by Bernstein and Sato. 
But in the meromorphic case, the relation among 
$b_{f,m}^{{\rm mero}}(s)$ for various 
$m \geq 0$ is not very clear so far. 
See Lemma \ref{Lemma-4} for a weak relation 
among their roots.  Nevertheless, we can prove a 
Kashiwara-Malgrange type result as follows. 
First, recall the following theorem due to \cite{G-L-M}. 

\begin{theorem}\label{the-fib} 
(Gusein-Zade, Luengo and Melle-Hern\'andez 
\cite{G-L-M}) 
For any point $x \in F^{-1}(0)$ 
close to the point $x_0$ there exists 
$\e_0> 0$ such that for any $0< \e < \e_0$ 
and the open ball $B(x; \e ) \subset X$ of 
radius $\e >0$ with center at $x$ 
(in a local chart of $X$) the restriction 
\begin{equation}
B(x; \e ) \setminus G^{-1}(0) 
\longrightarrow \CC
\end{equation}
of $f: X \setminus G^{-1}(0) 
\longrightarrow \CC$ is a locally trivial 
fibration over a sufficiently small 
punctured disk in $\CC$ with center at 
the origin $0 \in \CC$ 
\end{theorem}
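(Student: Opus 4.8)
The plan is to reduce to the classical Milnor fibration theorem by resolving the indeterminacy of $f$, where $f$ becomes a genuine holomorphic map to $\PP^1$, running a Thom--Mather isotopy argument there, and then transporting the fibration back down to $B(x;\varepsilon)$.

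First I would dispose of the case $x\in F^{-1}(0)\setminus G^{-1}(0)$: there $G$ is invertible on a small ball $B(x;\varepsilon)$, so $B(x;\varepsilon)\setminus G^{-1}(0)=B(x;\varepsilon)$ and $f=F/G$ is holomorphic and non-constant on it (since $F\not\equiv0$ and $f(x)=0$), and the statement is exactly Milnor's fibration theorem for the germ of $f$ at $x$. So the substance is the case $x\in Z:=F^{-1}(0)\cap G^{-1}(0)$, which I treat as follows. By Hironaka's theorem, choose a proper bimeromorphic morphism $\pi\colon Y\to X$ with $Y$ smooth, an isomorphism over $X\setminus Z$, which resolves the indeterminacy of the meromorphic map $[F:G]$ (defined on $X\setminus Z$, valued in $\PP^1$), so that $\tilde f:=[F:G]\circ\pi$ extends to a holomorphic map $\tilde f\colon Y\to\PP^1$ with $\tilde f=f\circ\pi$ on $Y\setminus\pi^{-1}(G^{-1}(0))$. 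Write $\mathcal E$ for the exceptional divisor, $E:=\pi^{-1}(x)$ for the (compact) fibre over $x$, and $U_\varepsilon:=\pi^{-1}(B(x;\varepsilon))$. Two facts I would record here: (i) since $Z\subset G^{-1}(0)$ we have $X\setminus G^{-1}(0)\subset X\setminus Z$, so $\pi$ restricts to an isomorphism $U_\varepsilon\setminus\pi^{-1}(G^{-1}(0))\simto B(x;\varepsilon)\setminus G^{-1}(0)$ carrying $\tilde f$ to $f$, and $\pi^{-1}(\overline{B(x;\varepsilon)})$ is compact with the $U_\varepsilon$ shrinking to $E$ as $\varepsilon\downarrow0$; (ii) every exceptional component of $\pi$ lies over $Z\subset G^{-1}(0)$ and $\tilde f\equiv\infty$ on the strict transform of $G^{-1}(0)$ (as $f=\infty$ on $G^{-1}(0)\setminus Z$), whence $\pi^{-1}(G^{-1}(0))\cap\tilde f^{-1}(c)\subset\mathcal E$ for every $c\in\PP^1\setminus\{0,\infty\}$.

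Next I would fix a Whitney stratification of the compact set $\pi^{-1}(\overline{B(x;\varepsilon)})$ compatible with $\mathcal E$, with $\pi^{-1}(G^{-1}(0))$ and with the hypersurface $\pi^{-1}(\partial B(x;\varepsilon))$, with respect to which $\tilde f$ is a stratified submersion away from a finite subset of $\PP^1$; such a stratification exists because the proper map $\tilde f|_{\pi^{-1}(\overline{B(x;\varepsilon)})}$ admits one by the theory of stratified maps (Thom, Verdier), and — by Milnor's sphere-transversality argument, carried out here upstairs on $Y$ by means of the curve-selection lemma — for all sufficiently small $\varepsilon$ one can in addition take the hypersurfaces $\pi^{-1}(\partial B(x;\varepsilon'))$, $0<\varepsilon'\le\varepsilon$, transverse to all strata and to the fibres of $\tilde f$ near $0$. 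Then, for $0<\delta\ll\varepsilon$ with $\Delta^*_\delta:=\{\,t\in\CC\mid 0<|t|<\delta\,\}$ avoiding those finitely many bad values, the map $\tilde f\colon\tilde f^{-1}(\Delta^*_\delta)\cap\pi^{-1}(\overline{B(x;\varepsilon)})\to\Delta^*_\delta$ is a proper stratified submersion, so the Thom--Mather first isotopy lemma makes the stratified pair
\[
\bigl(\,\tilde f^{-1}(\Delta^*_\delta)\cap U_\varepsilon,\ \ \tilde f^{-1}(\Delta^*_\delta)\cap U_\varepsilon\cap\pi^{-1}(G^{-1}(0))\,\bigr)\ \longrightarrow\ \Delta^*_\delta
\]
locally trivial over $\Delta^*_\delta$. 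Deleting the second member leaves a locally trivial fibration over $\Delta^*_\delta$, and transporting it through the isomorphism of (i) yields exactly the map $f\colon f^{-1}(\Delta^*_\delta)\cap B(x;\varepsilon)\to\Delta^*_\delta$, which is the assertion.

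The hard part, and the one genuinely new feature compared with the classical case, will be the \emph{dicritical} components of $\pi$ — exceptional components on which $\tilde f$ is non-constant, which are forced by the non-trivial indeterminacy of $[F:G]$ along $Z$ and do not occur when $G\equiv1$. Because of them $\tilde f^{-1}(\Delta^*_\delta)$ genuinely meets $\pi^{-1}(G^{-1}(0))$; equivalently $f^{-1}(\Delta^*_\delta)\cap B(x;\varepsilon)$ is not relatively compact in $B(x;\varepsilon)$ and $f$ is not a proper map onto $\Delta^*_\delta$, so one cannot apply Ehresmann's theorem directly downstairs and is forced to argue on $Y$ and to descend the fibration through a stratified \emph{pair}, discarding the dicritical contribution along the way. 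The secondary technical point — transversality of the pulled-back spheres to the strata and to the nearby fibres of $\tilde f$ — is a routine, if a little delicate, transcription of Milnor's curve-selection argument to the resolved space $Y$.
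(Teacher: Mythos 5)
The paper does not prove Theorem \ref{the-fib}; it is imported verbatim from Gusein-Zade--Luengo--Melle-Hern\'andez \cite{G-L-M}, so there is no internal proof to compare yours against. Your argument is essentially the standard proof of this fibration theorem and is close in spirit to the one in the cited literature: resolve the indeterminacy of $[F:G]$, pass to the holomorphic map $\tilde f\colon Y\to\PP^1$ on the compact set $\pi^{-1}(\overline{B(x;\e)})$, show it is a proper stratified submersion over a small punctured disk about $0$ for a stratification compatible with $\pi^{-1}(G^{-1}(0))$ and with the pulled-back sphere, apply the Thom--Mather isotopy lemma to the stratified pair, and descend by deleting the strata lying in $\pi^{-1}(G^{-1}(0))$. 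You also correctly isolate the genuinely new feature relative to Milnor's theorem: the dicritical exceptional components make $f^{-1}(t)\cap B(x;\e)$ non-compact and $f$ non-proper over the punctured disk, so Ehresmann cannot be applied downstairs and the fibration must be produced upstairs for a pair. I consider the proof correct in outline.

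Two points deserve more care than your write-up gives them. First, ``stratified submersion away from a finite subset of $\PP^1$'' is both more than you need and not quite free; what you actually need is that the stratified critical values of $\tilde f$ on $\pi^{-1}(\overline{B(x;\e)})$ do not accumulate at $0$ from the punctured disk, and this follows from the curve selection lemma applied to $\mathrm{Crit}(\tilde f|_S)\setminus\tilde f^{-1}(0)$ for each stratum $S$: along a real-analytic arc of such critical points tending to a point of $\tilde f^{-1}(0)$ one has $(\tilde f\circ\gamma)'\equiv0$, forcing $\tilde f\circ\gamma\equiv0$, a contradiction. The same curve-selection scheme applied to $\rho=|\cdot-x|^2\circ\pi$ --- whose zero set is now the compact fibre $E=\pi^{-1}(x)$ rather than a point, which changes nothing in the argument --- gives transversality of $\pi^{-1}(\partial B(x;\e'))$ to all strata for all small $\e'$. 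Second, to pass from transversality of the sphere preimage to the strata of $\tilde f^{-1}(0)$ to its transversality to the nearby fibres $\tilde f^{-1}(t)\cap S$, $0<|t|<\delta$ (which is what the boundary strata of the stratified submersion require), you should invoke the Thom $a_{\tilde f}$ condition for the stratification (available by Hironaka) together with the compactness of $\pi^{-1}(\overline{B(x;\e)})$. With these two points made explicit the argument is complete.
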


We call the fiber in this theorem the Milnor fiber of 
the meromorphic function $f(x)= \frac{F(x)}{G(x)}$ 
at $x \in F^{-1}(0)$ and denote it by $M_x$. 
As in the holomorphic case (see Milnor \cite{Milnor}), 
we obtain also its Milnor monodromy operators 
\begin{equation}
\Phi_{j,x}: H^j(M_x; \CC ) \simto H^j(M_x; \CC ) \qquad 
(j \geq 0 ). 
\end{equation}
Then we have the following result. 
Let ${\rm E}_{f,x_0} \subset \CC^*$ be the set of 
the eigenvalues 
of the monodromies $\Phi_{j,x}$ of $f$ at the points 
$x \in F^{-1}(0)$ close to $x_0$ and $j \geq 0$. 

\begin{theorem}\label{Main-2} 
Let $m \geq 0$ be a non-negative integer. Then 
we have 
\begin{equation}
\{ \exp (2 \pi i \alpha ) \ | \ 
\alpha \in (b_{f,m}^{{\rm mero}})^{-1}(0) \} 
\subset {\rm E}_{f,x_0}. 
\end{equation}
If we assume moreover that $m \geq 2 \dim X$, 
then we have an equality 
\begin{equation}
\{ \exp (2 \pi i \alpha ) \ | \ 
\alpha \in (b_{f,m}^{{\rm mero}})^{-1}(0) \} 
= {\rm E}_{f,x_0}. 
\end{equation}
\end{theorem}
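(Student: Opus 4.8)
Here is how I would try to prove Theorem \ref{Main-2}.

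The plan is to run the classical Kashiwara--Malgrange argument, the novelty being the passage from $X\setminus G^{-1}(0)$ to $X$. First I would rephrase the right-hand side $\mathrm{E}_{f,x_0}$ in terms of nearby cycles. Choosing a proper modification $\pi\colon\widetilde X\to X$, an isomorphism over $X\setminus(F^{-1}(0)\cap G^{-1}(0))$, which resolves the indeterminacy locus of $f$, the meromorphic function lifts to a morphism $\widetilde f\colon\widetilde X\to\PP^1$; by Theorem \ref{the-fib} together with proper base change, for $x\in F^{-1}(0)$ close to $x_0$ the cohomology $H^j(M_x;\CC)$ with $\Phi_{j,x}$ is the stalk at $x$ of $R\pi_\ast$ of the nearby cycle complex of $\widetilde f$ along $\widetilde f^{-1}(0)$, applied to a suitable extension across $\widetilde f^{-1}(\infty)$ of the constant sheaf. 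Thus, informally, $\mathrm{E}_{f,x_0}$ is the set of eigenvalues near $x_0$ of the monodromy on the nearby cycles of $f$ along $F^{-1}(0)$, the behaviour across $F^{-1}(0)\cap G^{-1}(0)$ being governed by the choice of extension across $G^{-1}(0)$ (the $Rj_\ast$ versus $Rj_!$ ambiguity, $j\colon X\setminus G^{-1}(0)\hookrightarrow X$); this is exactly the source of the several b-functions mentioned in the abstract.

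Second, I would identify $b_{f,m}^{\mer}(s)$ with a $V$-filtration b-function. Using the graph embedding of $f$ on $X\setminus G^{-1}(0)$, followed by the open embedding into $X\times\CC_t$, one builds a holonomic $\D_{X\times\CC_t}[s]$-module $\mathcal{N}_m$ generated by $\tfrac{1}{G^m}f^s$, in which $\sum_{k\geq1}\D_X[s]\tfrac{1}{G^m}f^{s+k}$ is the step $V^{\geq1}$ of the Kashiwara--Malgrange filtration along $\{t=0\}$ and $s$ acts as $-\partial_t t$. As in the classical case, the functional equation \eqref{fe-1} then says that $b(-\partial_t t)$ kills the class of the generator in $\mathrm{gr}^0_V\mathcal{N}_m$, and the minimality in Definition \ref{BS-fun} makes $b_{f,m}^{\mer}(s)$ the minimal polynomial of $-\partial_t t$ on the $\D_X$-submodule of $\mathrm{gr}^0_V\mathcal{N}_m$ it generates. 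This is precisely where one sees why \eqref{fe-1} must use the full sum and not the single term $k=1$ of \eqref{fe-2}: since $f=F/G$ has poles along $G^{-1}(0)$, multiplication by $f$ raises pole orders, so $\tfrac{1}{G^m}f^{s+1}$ alone generates a $\D_X[s]$-module strictly smaller than $V^{\geq1}\mathcal{N}_m$, and only \eqref{fe-1} produces the honest $V$-filtration b-function to which the comparison theorem applies.

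Third, the inclusion. Writing $b_{f,m}^{\mer}(s)=\prod_i(s-\alpha_i)$, minimality forces $\mathrm{gr}^{\alpha_i}_V\mathcal{N}_m\neq0$ for every $i$; by Kashiwara's theorem the de Rham complex of $\bigoplus_\alpha\mathrm{gr}^\alpha_V\mathcal{N}_m$ is, up to shift, the nearby cycle complex along $\{t=0\}$ of the de Rham complex of $\mathcal{N}_m$, with the semisimple part of the monodromy acting on the $\alpha$-summand by $\exp(2\pi i\alpha)$. Since $\mathcal{N}_m$ is supported on the closure of the graph of $f$ and restricts over $X\setminus G^{-1}(0)$ to a rank-one twist of $\sho$, this complex is supported on $F^{-1}(0)$ and its stalks near $x_0$ compute (a version of) the Milnor-fibre cohomology of $f$; hence $\exp(2\pi i\alpha_i)\in\mathrm{E}_{f,x_0}$ for all $i$. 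This uses nothing about the size of $m$, which is why the inclusion holds for every $m\geq0$. For the equality one must show that when $m\geq2\dim X$ the module $\mathcal{N}_m$ is already large enough that $\psi_t$ of its de Rham complex realizes the \emph{full} Milnor-fibre cohomology near $x_0$, i.e.\ that the $\tfrac{1}{G^m}$-twist produces the maximal admissible extension across $G^{-1}(0)$ rather than a proper submodule of it; the bound $2\dim X$ should come from a generation estimate of the type $\sho_X[\tfrac1h]=\D_X\cdot h^{-\dim X}$ applied once to pass from $\sho_X$ to the ambient localization $\sho_X[\tfrac1{FG}]$ and once more to absorb the poles created by $f^s$ itself along $G^{-1}(0)$, each contributing a power of order $\dim X$. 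Granting this, the roots $\alpha_i$ of $b_{f,m}^{\mer}$ exhaust the $\alpha$ with $\mathrm{gr}^\alpha_V\mathcal{N}_m\neq0$, and the nearby cycle complex whose monodromy eigenvalues form $\mathrm{E}_{f,x_0}$ is exactly $\bigoplus_\alpha\mathrm{DR}(\mathrm{gr}^\alpha_V\mathcal{N}_m)$, giving the reverse inclusion. I expect this last point — that $m\geq2\dim X$ forces $\mathcal{N}_m$ to be the maximal extension, so that no eigenvalue of the Milnor monodromy is lost — to be the main obstacle and the real content of the theorem, the remaining steps being a careful transcription of the classical Kashiwara--Malgrange argument to the meromorphic, localized setting.
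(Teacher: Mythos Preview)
Your architecture is the paper's: interpret $b_{f,m}^{\mer}(s)$ as $p_{\sigma_m}(-s-1)$ for the section $\sigma_m=\tfrac{1}{G^m}\delta(t-f)$ inside the fixed regular holonomic module $\M=\{\HH^1_{[Z]}(\sho_{X\times\CC})\}[\tfrac1G]$ (rather than in a module depending on $m$), invoke Kashiwara's isomorphism $\bigoplus_{-1\le\alpha<0}{\rm DR}_X({\rm gr}^V_\alpha\M)\simeq\psi_t({\rm DR}_{X\times\CC}\M)$ together with the identification of the right-hand side with the meromorphic nearby cycles $\psi_f^{\mer}(\CC_X)$, deduce the inclusion from the submodule $\D_{X\times\CC}\sigma_m\subset\M$, and the equality once $\sigma_m$ generates all of $\M$.

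The one place your sketch is imprecise is the mechanism behind the threshold $m\ge 2\dim X$, which you rightly flag as the crux. The paper (Lemma \ref{Lemma-3}) does not apply a generation estimate twice on $X$; it applies it once on $X\times\CC$ to the single function $g=(tG-F)\cdot G$. Saito's bound places the roots of $b_g$ in the open interval $(-n-1,0)$ with $n=\dim X$, so the class $[g^{-n}]=[(tG-F)^{-n}G^{-n}]$ already generates $\M$ over $\D_{X\times\CC}$. The second $n$ then comes from a trade-off: applying $\partial_t$ to $\sigma_m=[(tG-F)^{-1}G^{-(m-1)}]$ raises the $(tG-F)$-pole order by one but \emph{lowers} the $G$-pole order by one, since $\partial_t(tG-F)^{-1}=-G\,(tG-F)^{-2}$. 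Hence $\partial_t^{\,n-1}\sigma_m$ is a nonzero constant times $[(tG-F)^{-n}G^{-(m-n)}]$, which dominates $[g^{-n}]$ exactly when $m-n\ge n$. So the factor $2$ arises from this exchange of pole orders between the two factors of $g$, not from two independent localizations as in your heuristic.
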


Combining Theorem \ref{Main-2} with the results in 
\cite{N-T} and \cite{Raibaut}, one may 
formulate a monodromy conjecture for 
rational functions, like the original 
one in \cite{D-L}. For previous works in 
this direction, see for example \cite{G-L} and \cite{V-Z}. 
Note that if in a coordinate system 
$F$ and $G$ depend on 
separated variables we can easily see that 
our $b_{f,m}^{{\rm mero}}(s)$ coincides with 
the b-function $b_F(s)$ of the holomorphic 
function $F$. At the moment, except for such trivial 
cases, we can not 
calculate $b_{f,m}^{{\rm mero}}(s)$ explicity. 
Instead, by \cite[Theorem 3.3 and Corollary 3.4]{N-T} 
for many $f= \frac{F}{G}$ we can calculate 
${\rm E}_{f,x_0}$ completely. Namely, for 
$m \geq 2 \dim X$ the roots of 
$b_{f,m}^{{\rm mero}}(s)$ of such $f$ 
can be determined up to some shifts of 
integers and multiplicities. 
Moreover in Section \ref{sec:s4}, 
we also give an upper bound 
\begin{equation}
(b_{f,m}^{{\rm mero}})^{-1}(0) \subset 
B_{f,m}^{\pi} \subset \QQ \qquad (m \geq 0)
\end{equation}
for the roots of $b_{f,m}^{{\rm mero}}(s)$ 
described in terms of 
resolutions of singularities 
$\pi : Y \longrightarrow X$ 
of the divisor $D \subset X$ such that 
$\pi^{-1}(D) \subset Y$ is normal crossing. 
If $G=1$ and $f$ is holomorphic, 
this corresponds to the negativity  
of the roots of b-functions 
proved by Kashiwara \cite{K-1}. 
Indeed, in particular for $m=0$ our upper bound 
means that the roots of $b_{f,0}^{{\rm mero}}(s)$ 
are negative rational numbers. 
Moreover by defining a reduced b-function 
$\tilde{b}_{f}^{{\rm mero}} (s)$ of $f$ 
we obtain also a lower bound 
\begin{equation}
( \tilde{b}_{f}^{{\rm mero}})^{-1}(0)
\subset 
(b_{f,m}^{{\rm mero}})^{-1}(0) \subset \QQ 
\qquad (m \geq 0). 
\end{equation} 
This $\tilde{b}_{f}^{{\rm mero}} (s)$ 
could be a candidate for the b-function 
of the meromorphic function $f$. 
However to our regret, as we shall see in Proposition 
\ref{PROP-1}, it has much less information 
on the singularities 
of $f$ than $b_{f,m}^{{\rm mero}}(s)$. 
See Section \ref{sec:s4} for the details. 
Finally in Section \ref{sec:s5}, we introduce 
multiplier ideal sheaves for the 
meromorphic function $f= \frac{F}{G}$ 
and show that their jumping numbers are 
contained in the set 
\begin{equation}
\bigcup_{i=0,1,2, \ldots}
\Bigl\{ -(b_{f,0}^{{\rm mero}})^{-1}(0)+i \Bigr\}
\subset \QQ_{>0}. 
\end{equation}
This is an analogue for meromorphic functions 
of the main theorem of Ein-Lazarsfeld-Smith-Varolin 
\cite{E-L-S-V}. See Corollary \ref{Cor-jump} 
for the details. 

\par 
After we posted this paper to the arXiv, we were 
informed from the authors {\`A}lvarez Montaner, 
Gonz{\'a}lez Villa, Le{\'o}n-Cardenal and N{\'u\~n}ez-Betancourt 
of \cite{A-G-L-N} that they were also developing a 
theory of b-functions for meromorphic functions 
similar to but different from ours. 
Among other things, for the meromorphic function 
$f= \frac{F}{G}$ they define their b-function 
$b_{F/G}(s) \in \CC [s]$ to be the minimal 
polynomial $b(s) \not= 0$ satisfying the equation 
\begin{equation}
b(s) f^{s} \in \D_X[s] f^{s+1} 
\end{equation}
and apply it to the studies of the analytic 
continuations of Archimedian local zeta functions 
and multiplier ideals associated to $f= \frac{F}{G}$. 
Moreover, in \cite[Theorem 6.7]{A-G-L-N} 
they obtain a result on the jumping numbers of 
multiplier ideals similar to Corollary \ref{Cor-jump}. 
Since our $b_{f,0}^{{\rm mero}}(s)$ divides 
their $b_{F/G}(s)$, it is not clear if 
Corollary \ref{Cor-jump} follows 
from \cite[Theorem 6.7]{A-G-L-N}. 
In addition, our b-function $b_{f,0}^{{\rm mero}}(s)$ 
satisfies a nice relationship with the V-filtration of 
a holonomic D-module (see Theorem 
\ref{Main-5}). From this, we see also that  
the minimal jumping number $\alpha >0$ is 
equal to the negative of the largest root 
of $b_{f,0}^{{\rm mero}}(s)$ 
(see Corollary \ref{Cor-jump}). Altogether, 
the results in \cite{A-G-L-N} look very useful 
and complementary to ours. 
Especially for some basic properties of 
the multiplier ideals, we refer to 
\cite[Sections 6 and 7]{A-G-L-N}.

\bigskip
\noindent{\bf Acknowledgement:} 
The author thanks Professors N{\'u\~n}ez-Betancourt, 
Kakehi, Oaku, Sabbah and Schapira 
for some useful discussions during the 
preparation of this paper. He is also 
grateful to the anonymous referee 
whose comments and suggestions were very helpful.

\section{Proof of Theorem \ref{Main-1}}\label{sec:s2}

We follow the classical arguments of 
Kashiwara \cite{K-1}, Malgrange \cite{M}, 
Gyoja \cite{G} and Sabbah \cite{M-S}. 
For the theory of D-modules, we refer to 
\cite{Dimca}, \cite{H-T-T}, \cite{K-book}, 
\cite{K-S}, \cite{M-S} and use freely 
the notions and the terminologies in them. 
Let $\CC [s,t]$ be the $\CC$-algebra generated by 
the two elements $s,t$ satisfying the relation 
$ts=(s+1)t$ i.e. $[t,s]=t$. Similarly, 
we define $\CC [s, t^{\pm}]$, 
$\D_X [s,t]$ and $\D_X [s, t^{\pm}]$. Then there 
exists a natural isomorphism 
\begin{equation}
\CC [s, t^{\pm}] \simto \CC [t, \partial_t]
\Bigl[ \frac{1}{t} \Bigr] 
\qquad (s \longmapsto - \partial_t t) 
\end{equation}
of $\CC$-algebras (see Gyoja \cite{G}) and the one 
\begin{equation}
\D_X [s, t^{\pm}] \simto 
 ( \D_X \otimes_{\CC_X} \CC_X[t, \partial_t]) 
\Bigl[ \frac{1}{t} \Bigr] 
\end{equation}
of $\D_X$-algebras associated to it. In the product 
space $X \times \CC$ we define a hypersurface 
$Z \subset X \times \CC$ by 
\begin{equation}
Z= \{ (x,t) \in X \times \CC \ | \ 
tG(x)-F(x)=0 \}. 
\end{equation}
Note that $Z$ is the closure of the graph of 
the meromorphic function $f= \frac{F}{G}: 
X \setminus G^{-1}(0) \longrightarrow \CC$ 
in $X \times \CC$. Let 
\begin{equation}
\HH^1_{[Z]}( \sho_{X \times \CC}) \simeq 
\frac{ \sho_{X \times \CC} [ \frac{1}{tG-F} ]}{
\sho_{X \times \CC} }
\end{equation}
be the first local cohomology sheaf of 
$\sho_{X \times \CC}$ along $Z \subset X \times \CC$ 
and define a regular holonomic $\D_{X \times \CC}$-module 
$\M$ by 
\begin{equation}\label{EQ-2} 
\M : = 
\{ \HH^1_{[Z]}( \sho_{X \times \CC}) \} 
\Bigl[ \frac{1}{G} \Bigr] 
 \simeq \frac{\sho_{X \times \CC} [ \frac{1}{(tG-F)G} ]
}{\sho_{X \times \CC} [ \frac{1}{G} ]}
\end{equation}
which is endowed with the canonical section 
\begin{equation}
\delta (t-f(x)):= 
 \Bigl[ \frac{1}{t-f(x)} \Bigr]= 
 \Bigl[ \frac{G(x)}{tG(x)-F(x)} \Bigr] \in \M . 
\end{equation}
Unlike the classical case where $f$ is holomorphic, 
this section does not necessarily generate $\M$ 
over $\D_{X \times \CC}$ (see Lemma \ref{Lemma-3} below). 
Nevertheless, 
as in Malgrange \cite{M} and Gyoja \cite{G} 
for any non-negative integer $m \geq 0$ 
there exists an isomorphism 
\begin{equation}\label{eq-fun}
\D_X [s, t^{\pm}] \Bigl( \frac{1}{G^m} f^{s} \Bigr)
\simto 
 ( \D_X \otimes_{\CC_X} \CC_X[t, \partial_t]) 
\Bigl[ \frac{1}{t} \Bigr] 
\Bigl( \frac{1}{G^m} \delta (t-f(x)) \Bigr) 
\end{equation}
($\frac{1}{G^m} f^{s} \longmapsto \frac{1}{G^m} \delta (t-f(x))$) 
on a neighborhood of $F^{-1}(0) \subset X$ 
which is linear over 
$\D_X [s, t^{\pm}] \simeq 
 ( \D_X \otimes_{\CC_X} \CC_X[t, \partial_t]) 
[ \frac{1}{t}]$. 
Since there is no non-zero section of $\M$ 
supported in $G^{-1}(0) \times \CC \subset 
X \times \CC$ by Hilbert's nullstellensatz, 
to show \eqref{eq-fun} it suffices to 
compare the annihilators of the 
generators of its both sides on 
$X \setminus G^{-1}(0)$. 
Here the right hand side of \eqref{eq-fun} is 
understood to be a subsheaf of 
$( \M |_{ \{ t=0 \} } ) [ \frac{1}{t}]$ and 
the multiplication by $t$ on it corresponds to 
the action $s \longmapsto s+1$ on the left hand 
side (see e.g. Gyoja \cite{G} for the details). 
Restricting the isomorphism  \eqref{eq-fun} 
to a subsheaf, we obtain an isomophism 
\begin{equation}
\D_X [s] \Bigl( \frac{1}{G^m} f^{s} \Bigr)
\simto 
\D_X [- \partial_t t] 
\Bigl( \frac{1}{G^m} \delta (t-f(x)) \Bigr). 
\end{equation}
Now let us consider the $V$-filtration 
$\{ V_j( \D_{X \times \CC} ) \}_{j \in \ZZ}$ 
of $\D_{X \times \CC}$ along the hypersurface 
$\{ t=0 \} = X \times \{ 0 \} \subset 
X \times \CC$. Similarly, we define 
a filtration 
$\{ V_j( \D_X \otimes_{\CC_X} \CC_X[t, \partial_t] ) 
\}_{j \in \ZZ}$ 
of $\D_X \otimes_{\CC_X} \CC_X[t, \partial_t] 
\subset \D_{X \times \CC} |_{ \{ t=0 \} }$. 
Denote the section 
\begin{equation}
\frac{1}{G^m} \delta (t-f(x)) \in \M |_{ \{ t=0 \} }
\end{equation}
of $\M |_{ \{ t=0 \} }$ simply by $\sigma_m$. 
Then by $t \cdot \delta (t-f)=f \cdot \delta (t-f)$ 
we obtain isomorphisms 
\begin{align}
V_0( \D_X \otimes_{\CC_X} \CC_X[t, \partial_t] ) \sigma_m 
 & \simeq 
\sum_{k=0}^{+ \infty} \D_X [s]
\Bigl( \frac{1}{G^m} f^{s+k} \Bigr), 
\\
V_{-1}( \D_X \otimes_{\CC_X} \CC_X[t, \partial_t] ) \sigma_m 
 & \simeq 
\sum_{k=1}^{+ \infty} \D_X [s]
\Bigl( \frac{1}{G^m} f^{s+k} \Bigr).
\end{align}
This implies that the $V_0( \D_X \otimes_{\CC_X} 
\CC_X[t, \partial_t] )$-module 
\begin{equation}
\K := \frac{V_{0}( \D_X \otimes_{\CC_X} \CC_X[t, \partial_t] ) 
\sigma_m }{V_{-1}( \D_X \otimes_{\CC_X} \CC_X[t, \partial_t] ) \sigma_m }
\end{equation}
is isomorphic to 
\begin{equation}\label{EQ-1}
\frac{  \sum_{k=0}^{+ \infty} \D_X [s]
( \frac{1}{G^m} f^{s+k} ) }{ 
\sum_{k=1}^{+ \infty} \D_X [s]
( \frac{1}{G^m} f^{s+k} ) }. 
\end{equation}
Here we 
used the identification 
\begin{equation}
V_{0}( \D_X \otimes_{\CC_X} \CC_X[t, \partial_t] ) 
\simeq \D_X [s,t]
\end{equation}
given by $- \partial_t t \longmapsto s$. 
Moreover by Lemma \ref{Lemma-1} below there exists 
also an isomorphism 
\begin{equation}
( \sho_{X \times \CC} |_{ \{ t=0 \} } ) 
\otimes_{\sho_X \otimes_{\CC_X} \CC_X[t]} 
\K \simeq \K^{\infty}  := 
\frac{ V_0( \D_{X \times \CC} )\sigma_m 
}{V_{-1}( \D_{X \times \CC} )\sigma_m}. 
\end{equation}
By the classical result on the specializability of 
$\M$ along $\{ t=0 \}$, there exists a non-zero 
polynomial $b(s) \in \CC [s]$ such that 
\begin{equation}
b( - \partial_t t ) \sigma_m \in 
V_{-1}( \D_{X \times \CC} ) \sigma_m.
\end{equation}
This condition is equivalent to the one that 
the image 
\begin{equation}\label{eq-im} 
\G := {\rm Im} [ b( - \partial_t t ) : 
\K^{\infty} \longrightarrow \K^{\infty} ]
\end{equation}
is zero. Note that the sheaf homomorphism 
\begin{equation}
b( - \partial_t t ) : \K \longrightarrow \K
\end{equation}
is $\sho_X \otimes_{\CC_X} \CC_X[t]$-linear and 
the above one in \eqref{eq-im} 
is obtained by applying 
the tensor product 
$( \sho_{X \times \CC} |_{ \{ t=0 \} } ) 
\otimes_{\sho_X \otimes_{\CC_X} \CC_X[t]} 
( \cdot )$ to it. 
Since $\sho_{X \times \CC} |_{ \{ t=0 \} } $ is flat over 
$\sho_X \otimes_{\CC_X} \CC_X[t]$, we thus obtain an 
isomorphism 
\begin{equation}
\G \simeq 
( \sho_{X \times \CC} |_{ \{ t=0 \} } ) 
\otimes_{\sho_X \otimes_{\CC_X} \CC_X[t]} 
{\rm Im} [b( - \partial_t t ) : \K \longrightarrow \K ]. 
\end{equation}
By $\G \simeq 0$ and the faithfully flatness of 
$\sho_{X \times \CC} |_{ \{ t=0 \} } $ over 
$\sho_X \otimes_{\CC_X} \CC_X[t]$, we obtain also 
\begin{equation}
{\rm Im} [b( - \partial_t t ) : \K \longrightarrow \K ]
\simeq 0. 
\end{equation}
It follows from the previous description 
\eqref{EQ-1} of $\K$ 
that we have the desired condition 
\begin{equation}
b(s) \Bigl( \frac{1}{G^m} f^{s} \Bigr) \in 
\sum_{k=1}^{+ \infty} \D_X [s]
\Bigl( \frac{1}{G^m} f^{s+k} \Bigr). 
\end{equation}
This completes the proof. 
\fin 

\medskip 
\par
\begin{lemma}\label{Lemma-1} 
There exists an isomorphism 
$( \sho_{X \times \CC} |_{ \{ t=0 \} } ) 
\otimes_{\sho_X \otimes_{\CC_X} \CC_X[t]} 
\K \simeq \K^{\infty}$. 
\end{lemma}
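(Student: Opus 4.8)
The plan is to deduce the isomorphism from the faithful flatness of $S:=\sho_X\otimes_{\CC_X}\CC_X[t]\hookrightarrow R:=\sho_{X\times\CC}|_{\{t=0\}}$ together with the compatibility of the two $V$-filtrations. Two preliminary facts are needed. First, $R$ is faithfully flat over $S$: stalkwise this is the (faithful, since the extension is local) flatness of the convergent power series ring in $t$ over the polynomial ring in $t$, a standard consequence of the Weierstrass division theorem. Second, writing $V'_j:=V_j(\D_X\otimes_{\CC_X}\CC_X[t,\partial_t])$ for $j=0,-1$, the very definition of the $V$-filtration along the smooth hypersurface $\{t=0\}$ gives that $V_0(\D_{X\times\CC})$ and $V'_0$ are free as left modules — over $R$, respectively $S$ — on the common Poincar\'e--Birkhoff--Witt basis $\{\partial_x^\alpha(t\partial_t)^\beta\}$, and that $V_{-1}=t\cdot V_0$ in both settings; hence the natural morphism is an isomorphism of left $R$-modules $R\otimes_S V'_j\ \simto\ V_j(\D_{X\times\CC})$ for $j=0,-1$. (In particular $\mathrm{gr}^V_0$ of the two rings is one and the same ring $\D_X[s]$, with $s=-\partial_t t$.)

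Granting this, I would apply $R\otimes_S(-)$ to the short exact sequence of $S$-modules
\begin{equation}
0\longrightarrow V'_{-1}\sigma_m\longrightarrow V'_0\sigma_m\longrightarrow\K\longrightarrow 0 .
\end{equation}
By flatness it stays exact, so it is enough to show that for $j=0,-1$ the natural map $R\otimes_S(V'_j\sigma_m)\to\M|_{\{t=0\}}$ is injective with image $V_j(\D_{X\times\CC})\sigma_m$; taking cokernels then gives $R\otimes_S\K\simeq V_0(\D_{X\times\CC})\sigma_m/V_{-1}(\D_{X\times\CC})\sigma_m=\K^\infty$, which is the assertion. Surjectivity onto $V_j(\D_{X\times\CC})\sigma_m$ is immediate from $R\otimes_S V'_j\simto V_j(\D_{X\times\CC})$, the image being $R\cdot(V'_j\sigma_m)=(R\otimes_S V'_j)\sigma_m=V_j(\D_{X\times\CC})\sigma_m$; in particular $V_0(\D_{X\times\CC})\sigma_m=V'_0\sigma_m+V_{-1}(\D_{X\times\CC})\sigma_m$.

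The injectivity is the crux. Since both $V'_j\sigma_m$ sit inside $\M|_{\{t=0\}}$, flatness reduces it to the single equality $V'_0\sigma_m\cap V_{-1}(\D_{X\times\CC})\sigma_m=V'_{-1}\sigma_m$ (equivalently: the annihilators of $\sigma_m$ in the cyclic $\D_X[s]$-modules $\K$ and $\K^\infty$ coincide), the inclusion $\supseteq$ being trivial. To prove $\subseteq$ one first localizes near $F^{-1}(0)$: both sides are supported there, since $\M$ is supported on $Z$ and $Z\cap(X\times\{0\})=F^{-1}(0)\times\{0\}$. At points of $F^{-1}(0)\setminus G^{-1}(0)$ the function $f$ is holomorphic and the equality is the classical statement of Malgrange \cite{M} and Gyoja \cite{G} underlying \eqref{eq-fun}. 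Near $F^{-1}(0)\cap G^{-1}(0)$, combining $t\cdot V'_0=V'_{-1}$, $t^kV'_{-1}\subseteq V'_{-1}$, $V_{-1}(\D_{X\times\CC})=t\,V_0(\D_{X\times\CC})$ and the identity $V_0(\D_{X\times\CC})\sigma_m=V'_0\sigma_m+V_{-1}(\D_{X\times\CC})\sigma_m$ above, one obtains $V_{-1}(\D_{X\times\CC})\sigma_m=V'_{-1}\sigma_m+t^k V_{-1}(\D_{X\times\CC})\sigma_m$ for every $k\geq1$; hence any $v$ in the left-hand intersection lies in $V'_{-1}\sigma_m+t^k\M|_{\{t=0\}}$ for all $k$, and the equality follows once one knows the $t$-adic separatedness of the submodule $V'_{-1}\sigma_m$ in $\M|_{\{t=0\}}$. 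This separatedness — that enlarging the coefficients of $t$ from polynomials to holomorphic functions introduces no new relations on $\sigma_m$ — is the point where I expect the real work; it should come from an Artin--Rees/Krull-intersection argument for the good $V$-filtration of the specializable regular holonomic module $\M$ along $\{t=0\}$, together, near $G^{-1}(0)$, with the fact (Hilbert's Nullstellensatz, as in the proof of \eqref{eq-fun}) that $\M$ has no non-zero local section supported on $G^{-1}(0)\times\CC$. Everything else is formal flat base change.
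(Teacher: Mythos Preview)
Your reduction to the intersection equality $V'_0\sigma_m\cap V_{-1}(\D_{X\times\CC})\sigma_m=V'_{-1}\sigma_m$ is correct, but the argument you offer for it has a genuine gap. The $t$-adic separatedness you need --- that $\bigcap_{k}\bigl(V'_{-1}\sigma_m+t^k\M|_{\{t=0\}}\bigr)=V'_{-1}\sigma_m$ --- is not a consequence of any standard Artin--Rees/Krull statement: $\M|_{\{t=0\}}$ is not finitely generated over $S$, and the good $V$-filtration of the holonomic module $\M$ controls $V_j(\D_{X\times\CC})\sigma_m$, not the inclusion of the polynomial submodule $V'_{-1}\sigma_m$ inside the analytic module. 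Your parenthetical appeal to the Nullstellensatz fact that $\M$ has no sections supported on $G^{-1}(0)\times\CC$ does not by itself give separatedness either. So the ``crux'' remains unproved.

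The paper avoids this difficulty altogether by introducing one intermediate object you do not use: the $S$-module
\[
\M_0\;=\;\frac{(\sho_X\otimes_{\CC_X}\CC_X[t])\bigl[\tfrac{1}{(tG-F)G}\bigr]}{(\sho_X\otimes_{\CC_X}\CC_X[t])\bigl[\tfrac{1}{G}\bigr]},
\]
the polynomial-in-$t$ version of $\M$. The point is twofold: (i) since $F,G$ are coprime, so are $tG-F$ and $G$ in $\sho_{X\times\CC}$, whence the natural map $\Phi:\M_0\to\M|_{\{t=0\}}$ is \emph{injective}; and (ii) $R\otimes_S\M_0\simeq\M|_{\{t=0\}}$ tautologically. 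By (i), $V'_j\sigma_m$ is the image of $V'_j\to\M_0$ for $j=0,-1$; by (ii), your identity $R\otimes_S V'_j\simeq V_j(\D_{X\times\CC})|_{\{t=0\}}$, and flatness (which preserves images of morphisms), one gets directly
\[
R\otimes_S\bigl(V'_j\sigma_m\bigr)\;\simeq\;\mathrm{Im}\bigl[\,V_j(\D_{X\times\CC})|_{\{t=0\}}\to\M|_{\{t=0\}}\,\bigr]\;=\;V_j(\D_{X\times\CC})\sigma_m,
\]
and the lemma follows by taking cokernels. In particular, no separatedness argument is needed: the coprimality of $F$ and $G$, via the injectivity of $\Phi$, is doing the real work.
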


\begin{proof} 
By our construction of the regular holonomic 
$\D_{X \times \CC}$-module $\M$ in the proof 
of Theorem \ref{Main-1}, there exist a natural 
morphism 
\begin{equation}
\Phi : \M_0 := 
\frac{( \sho_X \otimes_{\CC_X} \CC_X[t] ) [ \frac{1}{(tG-F)G} ]
}{( \sho_X \otimes_{\CC_X} \CC_X[t] ) [ \frac{1}{G} ]} 
\longrightarrow \M |_{ \{ t=0 \} } 
\end{equation}
of $\sho_X \otimes_{\CC_X} \CC_X[t]$-modules. 
Since $F, G \in \sho_X$ are coprime each 
other, the same is true also for 
$tG-F, G \in \sho_{X \times \CC}$ and hence the 
morphism $\Phi$ is injective. Therefore, 
for $j=0,-1$ the 
$V_{0}( \D_X \otimes_{\CC_X} \CC_X[t, \partial_t] )$-module 
$V_{j}( \D_X \otimes_{\CC_X} \CC_X[t, \partial_t] ) \sigma_m 
\subset \M |_{ \{ t=0 \} }$ is isomorphic to the 
image of the morphism 
\begin{equation}
V_{j}( \D_X \otimes_{\CC_X} \CC_X[t, \partial_t] ) 
\longrightarrow \M_0 \qquad (P \longmapsto P \sigma_m ). 
\end{equation}
By the ismorphisms 
\begin{equation}
( \sho_{X \times \CC} |_{ \{ t=0 \} } ) 
\otimes_{\sho_X \otimes_{\CC_X} \CC_X[t]} 
V_{j}( \D_X \otimes_{\CC_X} \CC_X[t, \partial_t] )
\simeq V_j( \D_{X \times \CC} ) |_{ \{ t=0 \} }  \qquad (j=0,-1), 
\end{equation}
\begin{equation}
( \sho_{X \times \CC} |_{ \{ t=0 \} } ) 
\otimes_{\sho_X \otimes_{\CC_X} \CC_X[t]} 
\M_0 \simeq \M |_{ \{ t=0 \} }
\end{equation}
and the 
flatness of $\sho_{X \times \CC} |_{ \{ t=0 \} } $ over 
$\sho_X \otimes_{\CC_X} \CC_X[t]$, we obtain 
isomorphisms 
\begin{align*}
V_j( \D_{X \times \CC} )  \sigma_m 
=  & {\rm Im}[ 
V_j( \D_{X \times \CC} )|_{ \{ t=0 \} }
\longrightarrow  \M |_{ \{ t=0 \} } ]
\\
\simeq & 
( \sho_{X \times \CC} |_{ \{ t=0 \} } ) 
\otimes_{\sho_X \otimes_{\CC_X} \CC_X[t]}
{\rm Im}[ 
V_j( \D_X \otimes_{\CC_X} \CC_X[t, \partial_t] ) 
\longrightarrow  \M_0 ]
\\
\simeq & 
( \sho_{X \times \CC} |_{ \{ t=0 \} } )
\otimes_{\sho_X \otimes_{\CC_X} \CC_X[t]}
V_j( \D_X \otimes_{\CC_X} \CC_X[t, \partial_t] ) \sigma_m 
\qquad (j=0,-1). 
\end{align*}
Then the assertion immediately follows. 
\end{proof}

\section{Proof of Theorem \ref{Main-2}}\label{sec:s3}

First of all, we shall recall the classical theory 
of Kashiwara-Malgrange filtrations. For more precise explanations on 
them, we refer to \cite{K-2} and \cite{M-S}. 
We assume first that $\M$ is a general regular holonomic 
$\D_{X \times \CC}$-module on the 
product of a complex manifold $X$ and $\CC_t$. 
Set $\theta =t \partial_t \in \D_{X \times \CC}$ 
and for a section $\sigma \in \M$ of $\M$ denote 
by $p_{\sigma}(s) \in \CC [s]$ the minimal 
polynomial such that 
\begin{equation}
p_{\sigma}( \theta ) \sigma \in 
V_{-1}( \D_{X \times \CC} ) \sigma. 
\end{equation}
Furthermore, we set 
\begin{equation}
{\rm ord}_{ \{ t=0 \} } ( \sigma ):=
p_{\sigma}^{-1}( 0 ) \subset \CC. 
\end{equation}
On the set $\CC$ of complex numbers let us consider 
the lexicographic order $\geq$ defined by 
\begin{equation}
z \geq w \ \Longleftrightarrow \ 
{\rm Re} z > {\rm Re} w \quad \text{or} \quad 
{\rm Re} z = {\rm Re} w, \ {\rm Im} z \geq {\rm Im} w.
\end{equation}
Then for $\alpha \in \CC$ we define a 
$V_0( \D_{X \times \CC} )$-submodule 
$V_{\alpha}( \M )$ of $\M$ by 
\begin{equation}
V_{\alpha}( \M )= \{ \sigma \in \M \ | \ 
{\rm ord}_{ \{ t=0 \} } ( \sigma ) \geq 
- \alpha -1 \}. 
\end{equation}
We can easily see that there exists a 
finite subset $A \subset \{ z \in \CC \ | \ 
-1 \leq z <0 \}$ such that for any section $\sigma \in \M$ of $\M$ 
we have 
\begin{equation}
{\rm ord}_{ \{ t=0 \} } ( \sigma ) \subset A+ \ZZ.
\end{equation}
Moreover for each element $\alpha \in A$ of such $A$ 
the filtration $\{ V_{\alpha +j}( \M ) \}_{j \in \ZZ}$ 
of $\M$ is a good $V$-filtration. 
For  $\alpha \in A+ \ZZ$ we set 
\begin{equation}
V_{< \alpha}( \M )= \bigcup_{\beta < \alpha} V_{\beta}( \M )
= \{ \sigma \in \M \ | \ 
{\rm ord}_{ \{ t=0 \} } ( \sigma ) >- \alpha -1 \}
\end{equation}
and 
\begin{equation}
{\rm gr}^V_{\alpha}( \M )= 
V_{\alpha}( \M )/ V_{< \alpha}( \M ). 
\end{equation}
Then ${\rm gr}^V_{\alpha}( \M )$ is a regular holonomic 
$\D_X$-module and we can easily show that there 
exists $N \gg 0$ such that 
\begin{equation}
( \theta + \alpha +1)^N {\rm gr}^V_{\alpha}( \M ) =0. 
\end{equation}
The following lemma is well-known to the specialists. 

\begin{lemma}\label{Lemma-2} 
Let $\sigma \in \M$ be a section of $\M$ such 
that $\D_{X \times \CC} \sigma = \M$. Then 
\begin{enumerate}
\item[\rm{(i)}] 
For any section $\tau \in \M$ of $\M$ 
we have 
\begin{equation}
{\rm ord}_{ \{ t=0 \} } ( \tau ) \subset 
{\rm ord}_{ \{ t=0 \} } ( \sigma ) + \ZZ. 
\end{equation}
\item[\rm{(ii)}] 
For any $\lambda \in {\rm ord}_{ \{ t=0 \} } ( \sigma )$ 
we have 
\begin{equation}
{\rm gr}^V_{- \lambda -1}( \M ) \not= 0. 
\end{equation}
\item[\rm{(iii)}] 
Conversely, if ${\rm gr}^V_{\alpha}( \M ) \not= 0$, 
then we have 
\begin{equation}
- \alpha -1 \in 
{\rm ord}_{ \{ t=0 \} } ( \sigma ) + \ZZ. 
\end{equation}
\end{enumerate}
\end{lemma}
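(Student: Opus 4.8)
The three assertions make up the classical dictionary, due to Kashiwara \cite{K-2} and Malgrange \cite{M} (see also \cite{M-S}), between the minimal polynomial $p_\sigma$ of a \emph{generating} section and the jumps of the canonical $V$-filtration of $\M$; the plan is to follow those arguments. First I would reduce (i) to (ii) and (iii). For an arbitrary section $\tau$ of $\M$ put $\N := \D_{X \times \CC}\tau \subseteq \M$; it is again regular holonomic, hence specializable along $\{t=0\}$, and its canonical $V$-filtration is the induced one, $V_\alpha(\N) = \N \cap V_\alpha(\M)$, so that ${\rm gr}^V_\alpha(\N) \hookrightarrow {\rm gr}^V_\alpha(\M)$ for every $\alpha$. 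Since the defining condition $p(\theta)\tau \in V_{-1}(\D_{X \times \CC})\tau$ involves $\tau$ only, ${\rm ord}_{ \{ t=0 \} }(\tau)$ is the same whether computed in $\N$ or in $\M$. Applying (ii) to the generator $\tau$ of $\N$ and then (iii) to the pair $(\M,\sigma)$ then gives
\begin{equation*}
{\rm ord}_{ \{ t=0 \} }(\tau)\ \subseteq\ \{\,-\alpha-1 \mid {\rm gr}^V_\alpha(\N)\ne 0\,\}\ \subseteq\ \{\,-\alpha-1 \mid {\rm gr}^V_\alpha(\M)\ne 0\,\}\ \subseteq\ {\rm ord}_{ \{ t=0 \} }(\sigma)+\ZZ ,
\end{equation*}
which is (i).

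For (ii) and (iii) the main tool is the coherent $V_0(\D_{X \times \CC})$-submodule $F := V_0(\D_{X \times \CC})\sigma \subseteq \M$, together with the $\ZZ$-indexed good $V$-filtration $U_k := V_k(\D_{X \times \CC})\sigma$; thus $U_0 = F$, $U_{-1} = V_{-1}(\D_{X \times \CC})\sigma = tF$, and ${\rm gr}^U_0(\M) = F/tF$. I would first observe that $t$ acts injectively on $\M$ (immediate from the construction of $\M$ in \S\ref{sec:s2}, since $tG-F$ and $t$ are coprime in $\sho_{X \times \CC}$). Consequently $V_0(\D_{X \times \CC}) / V_0(\D_{X \times \CC})t \cong \D_X[\theta]$, and $F/tF$ is a cyclic $\D_X[\theta]$-module generated by the class $\overline\sigma$; the minimality of $p_\sigma$ forces the monic generator of the ideal ${\rm Ann}_{\CC[\theta]}(\overline\sigma) \subset \CC[\theta]$ to be exactly $p_\sigma$, so that ${\rm ord}_{ \{ t=0 \} }(\sigma)$ coincides with the set of $\theta$-eigenvalues occurring in ${\rm gr}^U_0(\M) = F/tF$.

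Now (ii): let $\lambda \in {\rm ord}_{ \{ t=0 \} }(\sigma)$, so there is $0 \ne \overline v \in {\rm gr}^U_0(\M)$ with $\theta\overline v = \lambda\overline v$. I equip ${\rm gr}^U_0(\M)$ with the filtration $W_\bullet$ induced by the canonical $V$-filtration of $\M$; it is exhaustive (as $\bigcup_\alpha V_\alpha(\M) = \M$) and separated (as $\bigcap_\alpha V_\alpha(\M) = 0$), hence $\overline v$ lies in $W_\alpha \setminus W_{<\alpha}$ for exactly one $\alpha$. The graded piece $W_\alpha/W_{<\alpha}$ is a nonzero subquotient of ${\rm gr}^V_\alpha(\M)$, so ${\rm gr}^V_\alpha(\M)\ne 0$ and $\theta + \alpha + 1$ is nilpotent on $W_\alpha/W_{<\alpha}$; but the class of $\overline v$ there is a genuine $\theta$-eigenvector with eigenvalue $\lambda$, which forces $\lambda = -\alpha-1$, i.e.\ ${\rm gr}^V_{-\lambda-1}(\M)\ne 0$. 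For (iii) I argue dually: if ${\rm gr}^V_\alpha(\M)\ne 0$, equip it with the filtration induced by $U_\bullet$ (again exhaustive and separated); some graded piece ${\rm gr}^U_k\bigl({\rm gr}^V_\alpha(\M)\bigr)$ is then nonzero, it is a subquotient of ${\rm gr}^U_k(\M)$, and on it $\theta$ has the single eigenvalue $-\alpha-1$ by the nilpotency of $\theta+\alpha+1$. It remains to see that every $\theta$-eigenvalue of every ${\rm gr}^U_k(\M)$ lies in ${\rm ord}_{ \{ t=0 \} }(\sigma)+\ZZ$; this follows because, up to a shift of the $\theta$-action by an integer, ${\rm gr}^U_k(\M)$ is a subquotient of $F/tF$ for $k \le 0$ (via multiplication by the corresponding power of $t$, using $t$-torsion-freeness) and of $F$ for $k > 0$ (via $\partial_t^{\,k}$), while the $\theta$-eigenvalues of $F$ lie in $\bigcup_{\ell \ge 0}\bigl({\rm ord}_{ \{ t=0 \} }(\sigma)+\ell\bigr)$ by induction along the separated $t$-adic filtration $F \supseteq tF \supseteq t^2F \supseteq \cdots$. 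Hence $-\alpha-1 \in {\rm ord}_{ \{ t=0 \} }(\sigma)+\ZZ$, which is (iii).

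The step I expect to be the main obstacle is the $\theta$-eigenvalue (and nilpotency) bookkeeping relating the two good $V$-filtrations $U_\bullet$ and $V_\bullet$ — in effect, that the $\theta$-spectra of the ${\rm gr}^U_k(\M)$ are precisely what the canonical pieces ${\rm gr}^V_\alpha(\M)$ dictate, up to integer shifts — which rests on the $t$-torsion-freeness of $\M$, the commensurability of any two good $V$-filtrations, and the exhaustiveness and separatedness of the canonical $V$-filtration; once these are carefully in place, the remaining arguments are formal.
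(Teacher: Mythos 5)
The paper itself gives no proof of this lemma: it is stated as ``well-known to the specialists,'' with the intended references being Kashiwara \cite{K-2} and Mebkhout--Sabbah \cite{M-S}. Your argument is exactly the standard Kashiwara--Malgrange dictionary between the minimal polynomial of a generating section and the jumps of the canonical $V$-filtration, and its architecture is sound: the reduction of (i) to (ii) and (iii) via $\N=\D_{X\times\CC}\tau$ and the compatibility $V_\alpha(\N)=\N\cap V_\alpha(\M)$ is correct (with the paper's definition of $V_\alpha$ this compatibility is even tautological); the identification of ${\rm ord}_{\{t=0\}}(\sigma)$ with the generalized $\theta$-eigenvalues of ${\rm gr}^U_0(\M)=F/tF$ is correct because $\theta$ is central in $\D_X[\theta]$, so the annihilator of the generator $\overline\sigma$ annihilates the whole cyclic module; and the two ``cross-filtration'' arguments for (ii) and (iii) are the right ones, resting on exhaustiveness, separatedness and commensurability, which you correctly isolate as the technical backbone.

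The one step whose stated justification does not work is the eigenvalue bound for ${\rm gr}^U_k(\M)$ with $k>0$. You present ${\rm gr}^U_k(\M)$ as a quotient of $F$ via $\partial_t^{\,k}$ and then invoke ``the $\theta$-eigenvalues of $F$''; but eigenvalues do not pass from a module to its quotients in the direction you need (an eigenvector of the quotient need not lift to one of $F$, and $\theta$ satisfies no polynomial identity on $F$ itself, so $F$ has no generalized eigenspace decomposition to quote). The repair is standard and stays inside your framework: from $p_\sigma(\theta)\sigma\in U_{-1}$ and the commutation relations $\theta t=t(\theta+1)$, $\theta\partial_t=\partial_t(\theta-1)$ one gets $p_\sigma(\theta+k)\,U_k\subset U_{k-1}$ for \emph{every} $k\in\ZZ$ (equivalently: since $\partial_t^{\,k}(tF)\subset U_{k-1}$, the map $\partial_t^{\,k}$ descends to a surjection $F/tF\twoheadrightarrow {\rm gr}^U_k(\M)$ intertwining $\theta$ with $\theta+k$). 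Hence every ${\rm gr}^U_k(\M)$ is killed by $p_\sigma(\theta+k)$ and its generalized eigenvalues lie in ${\rm ord}_{\{t=0\}}(\sigma)-k$, which is all that (iii) requires; this is precisely the fact the paper itself uses later (``$p_\tau(\theta+j)U_j\subset U_{j-1}$'' in the proof of Theorem \ref{Main-3}). With that substitution your proof is complete.
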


Now we return to the situation in the proof of 
Theorem \ref{Main-1}. Namely, for the meromorphic 
function $f= \frac{F}{G}$ we have 
\begin{equation}
\M  \simeq \frac{\sho_{X \times \CC} [ \frac{1}{(tG-F)G} ]
}{\sho_{X \times \CC} [ \frac{1}{G} ]} 
\end{equation}
and 
\begin{equation}
\sigma_m = \frac{1}{G^m} \delta (t-f(x)) 
= \Bigl[ \frac{G}{(tG-F)G^m} \Bigr] \in \M \qquad 
(m \geq 0). 
\end{equation}
Then we have the following result, whose proof 
is inspired from Sabbah's exposition \cite{Sabbah-3}. 

\begin{lemma}\label{Lemma-3} 
Assume that $m \geq 2 \dim X$. Then $\M$ 
is generated by the section $\sigma_m \in \M$ 
over $\D_{X \times \CC}$ i.e. 
$\M = \D_{X \times \CC} \sigma_m$. 
\end{lemma}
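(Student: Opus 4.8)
The plan is to produce, inside $\M$, a section that manifestly generates the whole module over $\D_{X\times\CC}$, and then to exhibit it as a $\D_{X\times\CC}$-combination of $\sigma_m$. Recall that $\M \simeq \sho_{X\times\CC}[\tfrac{1}{(tG-F)G}] / \sho_{X\times\CC}[\tfrac{1}{G}]$, and that a natural candidate generator is the class $\tau := \bigl[\tfrac{1}{tG-F}\bigr] \in \M$: indeed $\D_{X\times\CC}\tau$ contains all classes $\bigl[\tfrac{h}{(tG-F)^k}\bigr]$ for $h\in\sho_{X\times\CC}$ and $k\ge 1$, simply by applying vector fields and multiplying by functions, while modulo $\sho_{X\times\CC}[\tfrac1G]$ the module $\M$ is generated over $\sho_{X\times\CC}[\tfrac1G]$ by the poles along $\{tG-F=0\}$; away from $G^{-1}(0)\times\CC$ this is the classical fact that $\delta(t-f)$ generates, and since $\M$ has no sections supported in $G^{-1}(0)\times\CC$ (Hilbert's Nullstellensatz, as already used in the proof of Theorem \ref{Main-1}), one concludes $\M=\D_{X\times\CC}\tau$. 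So it suffices to show $\tau \in \D_{X\times\CC}\sigma_m$.

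Next I would reduce the containment $\tau\in\D_{X\times\CC}\sigma_m$ to a local statement near points of $F^{-1}(0)\times\CC$, again using that there is no nonzero section supported on $G^{-1}(0)\times\CC$: away from $G^{-1}(0)$ the section $\sigma_m = \bigl[\tfrac{G}{(tG-F)G^m}\bigr]$ differs from $\tau$ by the invertible function $G^{1-m}$, hence generates the same $\D$-module there. The real work is local at a point where $G$ vanishes. There one must move the factor $\tfrac{1}{G^{m}}$ out: the hypothesis $m\ge 2\dim X$ should enter through a Briançon–Skoda–type or Bernstein–Sato–type argument applied to $G$ on the hypersurface $Z$, or more concretely through the observation that on $Z=\{tG-F=0\}$ one has $F = tG$, so powers of $G$ can be traded against powers of $F$ and of $t$, and applying enough differential operators $\partial_t$ (which act by $\partial_t\bigl[\tfrac{h}{(tG-F)^k}\bigr] = -kG\bigl[\tfrac{h}{(tG-F)^{k+1}}\bigr]+\cdots$) manufactures the needed powers of $G$ in the numerator to cancel $\tfrac1{G^m}$. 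The bound $2\dim X$ is the expected cost of clearing $G$ in all $\dim X$ local coordinate directions twice over — once to reach the ideal of $Z$ and once to invert it on the smooth locus — and I would follow Sabbah's exposition \cite{Sabbah-3} to organize this, localizing at a normal crossing model of $D$ if necessary so that $F$ and $G$ become monomials and the power count becomes explicit.

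The main obstacle I anticipate is precisely this local power-counting at the zeros of $G$: one must show that $2\dim X$ applications of vector fields and multiplications suffice to pass from $\sigma_m$ to $\tau$, and getting the constant right (rather than some larger, resolution-dependent number) is the delicate point. I would handle it by base-changing along an embedded resolution $\pi\colon Y\to X$ of $D$, where $tG-F$ and $G$ pull back to normal crossing divisors, carry out the monomial computation there where the exponent bookkeeping is transparent, and then push forward; the coherence and holonomicity of $\M$ guarantee that a containment of sections verified after such a faithfully flat localization holds already downstairs, exactly as in the flatness arguments of Lemma \ref{Lemma-1}.
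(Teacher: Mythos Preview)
Your plan has a genuine gap at the very first step. You claim that $\tau=[\frac{1}{tG-F}]$ generates $\M$ over $\D_{X\times\CC}$, arguing that this holds on $(X\setminus G^{-1}(0))\times\CC$ and that ``$\M$ has no sections supported in $G^{-1}(0)\times\CC$'' lets you conclude. This inference is invalid: the absence of $G$-torsion in $\M$ does \emph{not} imply that the quotient $\M/\D_{X\times\CC}\tau$ has no sections supported on $G^{-1}(0)\times\CC$. Concretely, take $F=x$, $G=y$ on $X=\CC^2$: then $\D_{X\times\CC}\tau$ lies in the image of $\sho_{X\times\CC}[\frac{1}{ty-x}]$ in $\M$, but the class $[\frac{1}{(ty-x)y}]\in\M$ is not in that image (a partial-fractions check shows $\frac{1}{(ty-x)y}\notin\sho[\frac{1}{ty-x}]+\sho[\frac{1}{y}]$ near the origin). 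So $\tau$ does not generate $\M$; neither does $\sigma_0$, as the paper itself warns just after introducing $\delta(t-f)$. Your later resolution-of-singularities and power-counting scheme never addresses how to reach the $1/G$-poles in $\M$ from $\sigma_m$, and the bound $m\ge 2\dim X$ remains a guess.

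The paper's argument is entirely different and avoids this obstacle by choosing a deeper generator. It applies the classical Bernstein--Sato functional equation to the \emph{holomorphic} function $g:=(tG-F)G$ on $X\times\CC$, together with Saito's bound that all roots of $b_g(s)$ lie in $(-\dim X-1,0)$. Iterating the functional equation and substituting $s=-n$ (with $n=\dim X$) shows that every $g^{-n-k}$ lies in $\D_{X\times\CC}\cdot g^{-n}$, so $\bigl[\frac{1}{(tG-F)^nG^n}\bigr]$ generates $\M$. Then one simply observes that $\partial_t^{\,n-1}\sigma_m$ is a nonzero constant times $\bigl[\frac{1}{(tG-F)^nG^{m-n}}\bigr]$; when $m\ge 2n$ this is $G^{m-2n}$ times the generator, hence $\sigma_m$ generates. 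No resolution is needed, and the constant $2\dim X$ falls out directly from Saito's root bound.
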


\begin{proof} 
Set $g:=(tG(x)-F(x)) \cdot G(x) \in \sho_{X \times \CC}$ 
and let $b_g(s) \in \CC [s]$ be its Bernstein-Sato 
polynomial. Then by \cite[Theorem 0.4]{S}, 
for any root $\alpha \in \QQ$ of $b_g(s)$ we have 
\begin{equation}
- \dim (X \times \CC)= - \dim X -1 < \alpha <0. 
\end{equation}
Moreover for any $k \geq 1$ there exists 
$P_k(s) \in \D_{X \times \CC} [s]$ such that 
\begin{equation}
b_g(s-k) \cdots \cdots b_g(s-2) b_g(s-1) g^{s-k} 
= P_k(s) g^s. 
\end{equation}
Set $n:= \dim X$. Then by substituting $s$ 
in the above formula by $-n$ we see that 
for any $k \geq 1$ the meromorphic function 
$g^{-n-k}$ is a non-zero constant multiple of 
$P_k(-n) g^{-n}$. This implies that 
\begin{equation}
\M  \simeq \frac{\sho_{X \times \CC} [ \frac{1}{(tG-F)G} ]
}{\sho_{X \times \CC} [ \frac{1}{G} ]} 
\end{equation}
is generated by its section 
\begin{equation}
\Bigl[ \frac{1}{g^n} \Bigr] = 
\Bigl[ \frac{1}{(tG-F)^nG^n} \Bigr] \in \M 
\end{equation}
over $\D_{X \times \CC}$. On the other hand, 
the section $\partial_t^{n-1} \sigma_m \in \M$ of $\M$ 
is a non-zero constant multiple of 
\begin{equation}
\Bigl[ \frac{1}{(tG-F)^{n}G^{m-n}} \Bigr] \in \M. 
\end{equation}
Therefore, if $m \geq 2n= 2 \dim X$, it 
generates $\M$ over $\D_{X \times \CC}$. 
\end{proof}

Now let us prove Theorem \ref{Main-2}. 
By the proof of Theorem \ref{Main-1} 
and the correspondence 
$s \longleftrightarrow - \partial_t t = - \theta -1$ 
the Bernstein-Sato polynomial 
$b_{f,m}^{{\rm mero}}(s)$ of $f$ coincides with 
$p_{\sigma_m}(-s-1)$. This in particular implies that 
we have 
\begin{equation}
(b_{f,m}^{{\rm mero}})^{-1}(0)= 
\{ - \lambda -1 \ | \ \lambda \in 
{\rm ord}_{ \{ t=0 \} } ( \sigma_m ) \}. 
\end{equation}
Note also that for the $\D_X$-module $\M$ in 
the proof of Theorem \ref{Main-1} we have 
an isomorphism 
\begin{equation}
{\rm DR}_{X \times \CC} ( \M ) \simeq 
R \Gamma_{(X \setminus G^{-1}(0)) \times \CC} 
( \CC_Z ) [n]
\end{equation}
and the nearby cycle sheaf 
$\psi_t({\rm DR}_{X \times \CC} ( \M ) )$ coincides 
with the meromorphic nearby cycle 
$\psi_f^{{\rm mero}} ( \CC_X )$ 
introduced in \cite{N-T} up to some shift. 
Assume first that $m \geq 2 \dim X$. Then 
by Lemma \ref{Lemma-3} the section 
$\sigma_m \in \M$ generates $\M$ over $\D_{X \times \CC}$ 
and the second assertion of Theorem \ref{Main-2} 
follows from Lemma \ref{Lemma-2}, 
Kashiwara's isomorphism 
\begin{equation}
\bigoplus_{-1 \leq \alpha <0} 
{\rm DR}_X ( {\rm gr}^V_{\alpha}( \M ) ) 
\simeq 
\psi_t( {\rm DR}_{X \times \CC} ( \M ))
\simeq \bigoplus_{-1 \leq \alpha <0} 
\psi_{t, \exp (2 \pi i \alpha )}
( {\rm DR}_{X \times \CC} ( \M ))
\end{equation}
and \cite[Lemma 2.1 (iii)]{N-T}. 
If we do not have the condition $m \geq 2 \dim X$, 
by considering the $\D_{X \times \CC}$-submodule 
$\D_{X \times \CC} \sigma_m \subset \M$ 
instead of $\M$ itself we obtain the first 
assertion of Theorem \ref{Main-2}.  
This completes the proof. 
\fin 

\medskip 
\par
By the proofs of Theorems \ref{Main-1} and 
\ref{Main-2} we obtain 
the following weak relation among the roots of 
the b-functions $b_{f,m}^{{\rm mero}}(s)$ for various $m \geq 0$. 

\begin{lemma}\label{Lemma-4} 
Let $m, m^{\prime} \geq 0$ be two non-negative integers 
such that $m \geq m^{\prime}$. Then for some $l \gg 0$ 
we have an inclusion 
\begin{equation}
(b_{f,m^{\prime}}^{{\rm mero}})^{-1}(0) \subset 
\bigcup_{i=0}^l \Bigl\{ (b_{f,m}^{{\rm mero}})^{-1}(0)-i 
\Bigr\}. 
\end{equation}
\end{lemma}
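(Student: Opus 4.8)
The plan is to compare the sections $\sigma_m = \frac{1}{G^m}\delta(t-f(x))$ and $\sigma_{m'} = \frac{1}{G^{m'}}\delta(t-f(x))$ of the regular holonomic $\D_{X \times \CC}$-module $\M$ from the proof of Theorem \ref{Main-1}, using the fact established there that
\begin{equation*}
(b_{f,m}^{{\rm mero}})^{-1}(0) = \{ -\lambda -1 \ | \ \lambda \in {\rm ord}_{\{t=0\}}(\sigma_m) \}
\end{equation*}
and likewise for $m'$. So the claimed inclusion is equivalent to showing that, for some $l \gg 0$,
\begin{equation*}
{\rm ord}_{\{t=0\}}(\sigma_{m'}) \subset \bigcup_{i=0}^{l}\bigl\{ {\rm ord}_{\{t=0\}}(\sigma_m) + i \bigr\}.
\end{equation*}

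First I would observe that since $m \geq m'$, we have $\sigma_{m'} = G^{m-m'} \sigma_m \in \D_{X \times \CC}\sigma_m$, so in fact $\D_{X \times \CC}\sigma_{m'} \subset \D_{X \times \CC}\sigma_m$ as $\D_{X \times \CC}$-submodules of $\M$. The key point is then that ${\rm ord}_{\{t=0\}}$ behaves well under passing to a submodule generated by an element lying in the bigger module: applying Lemma \ref{Lemma-2}(i) to the generator $\sigma_m$ of the module $\M_m := \D_{X \times \CC}\sigma_m$ and the section $\tau = \sigma_{m'} \in \M_m$, we immediately get
\begin{equation*}
{\rm ord}_{\{t=0\}}(\sigma_{m'}) \subset {\rm ord}_{\{t=0\}}(\sigma_m) + \ZZ.
\end{equation*}
This already gives the inclusion up to a shift by an \emph{a priori} unbounded set of integers; the remaining work is to bound the shift from above, i.e. to show only finitely many nonnegative $i$ are needed. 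For the upper bound I would use that multiplication by $t$ on $\M$ corresponds to $s \mapsto s+1$, together with the fact that $t \cdot \delta(t-f) = f\cdot\delta(t-f)$ and $G^{m-m'}\sigma_m = \sigma_{m'}$; concretely, since $G^{m-m'} = (tG - (tG-F))^{?}$—more cleanly, since $F = tG - (tG-F)$ and $F$ annihilates the relevant class only up to $G$-powers—one shows $\sigma_{m'}$ lies in $V_{\beta_0}(\M_m)$ for some $\beta_0$ bounded below independently of the section, using that $G^{m-m'}$ times a good $V$-filtration step lands in a controlled step (multiplication by the holomorphic function $G$ preserves $V_0$ and shifts nothing, while the relation $t\sigma_m = f\sigma_m$ lets one trade $f$-powers for $t$-powers). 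Thus there is $l$ with ${\rm ord}_{\{t=0\}}(\sigma_{m'}) \geq {\rm ord}_{\{t=0\}}(\sigma_m) - l$ in the lexicographic order on each $\ZZ$-coset, which combined with the coset inclusion above yields the finite union $\bigcup_{i=0}^{l}$.

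The main obstacle will be making the upper bound $l$ explicit and genuinely finite: the coset inclusion from Lemma \ref{Lemma-2}(i) is essentially free, but controlling how far \emph{down} in the $V$-filtration the section $\sigma_{m'} = G^{m-m'}\sigma_m$ can sit requires a good-filtration / coherence argument. I would handle this by noting that $\M_m = \D_{X\times\CC}\sigma_m$ is a coherent holonomic module, that $\{V_j(\D_{X\times\CC})\sigma_m\}_j$ is a good $V$-filtration of it, and that any fixed section of a coherently $V$-filtered module lies in $V_{\beta_0}$ for some single $\beta_0$; since $\M_m$ is generated by $\sigma_m$ over $\D_{X\times\CC}$ and $\sigma_m \in V_{\alpha_0}$ for the largest $\alpha_0 \in {\rm ord}_{\{t=0\}}(\sigma_m)+\ZZ$ meeting $[-1,0)$, every section of $\M_m$ — in particular $\sigma_{m'}$ — has order bounded uniformly, giving the finite $l$. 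A minor technical check is that everything is happening on a neighborhood of $F^{-1}(0)$ (where the isomorphisms of Theorem \ref{Main-1} hold), which is harmless since the b-functions and the set ${\rm ord}_{\{t=0\}}$ are computed there anyway. Assembling the coset inclusion with the uniform lower bound on the $V$-order then gives exactly the displayed statement.
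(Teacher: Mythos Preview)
Your overall setup matches the paper: you correctly reduce to comparing ${\rm ord}_{\{t=0\}}(\sigma_{m'})$ with ${\rm ord}_{\{t=0\}}(\sigma_m)$, observe that $\sigma_{m'} = G^{m-m'}\sigma_m \in V_0(\D_{X\times\CC})\sigma_m$, and your appeal to Lemma~\ref{Lemma-2}(i) for the $\ZZ$-coset inclusion is valid. But your bounding step has a genuine gap. Knowing that $\sigma_{m'}$ lies in some step $V_{\beta_0}(\D_{X\times\CC})\sigma_m$ of a good $V$-filtration on $\D_{X\times\CC}\sigma_m$, together with finiteness of both order sets, only tells you the required shifts lie in some finite subset of $\ZZ$; it does \emph{not} force them to be nonnegative, which is exactly what the range $i\in\{0,\ldots,l\}$ demands. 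Concretely: iterating $p_{\sigma_m}(\theta-j)$ pushes $\sigma_{m'}$ down the filtration $\{V_j(\D_{X\times\CC})\sigma_m\}_j$, but the definition of $p_{\sigma_{m'}}$ requires landing in $V_{-1}(\D_{X\times\CC})\sigma_{m'}$, and nothing in your argument bridges these two filtrations on the submodule generated by $\sigma_{m'}$.

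The missing ingredient is Artin--Rees, which is what the paper uses. Setting $\N^{\prime}=\D_{X\times\CC}\sigma_{m'}\subset\D_{X\times\CC}\sigma_m$, the induced filtration $U_j(\N^{\prime}):=\N^{\prime}\cap V_j(\D_{X\times\CC})\sigma_m$ is again a good $V$-filtration on $\N^{\prime}$; comparability of good $V$-filtrations then yields $l$ with $U_{-l-1}(\N^{\prime})\subset V_{-1}(\D_{X\times\CC})\sigma_{m'}$. Since $\sigma_{m'}\in U_0(\N^{\prime})$ and each factor $p_{\sigma_m}(\theta-j)$ drops the $U$-index by one, one obtains
\[
p_{\sigma_m}(\theta-l)\cdots p_{\sigma_m}(\theta-1)p_{\sigma_m}(\theta)\,\sigma_{m'}\in V_{-1}(\D_{X\times\CC})\sigma_{m'},
\]
so $p_{\sigma_{m'}}(s)$ divides $\prod_{i=0}^{l}p_{\sigma_m}(s-i)$ and the shifts are automatically in $\{0,\ldots,l\}$. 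Your instinct to invoke ``good-filtration/coherence'' is right, but the specific mechanism---Artin--Rees applied to the \emph{submodule} $\N^{\prime}$, not merely a bound on where a single section sits in the ambient filtration---is what delivers both the nonnegativity and the finite upper bound for $l$ in one stroke.
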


\begin{proof} 
By the proofs of Theorems \ref{Main-1} and \ref{Main-2} we have 
$b_{f,m}^{{\rm mero}}(s)=p_{\sigma_m}(-s-1)$ and 
$b_{f,m^{\prime}}^{{\rm mero}}(s)=
p_{\sigma_{m^{\prime}}}(-s-1)$. Moreover by our 
assumption $m \geq m^{\prime}$ we have 
\begin{equation}
\sigma_{m^{\prime}} = G^{m-m^{\prime}} \cdot 
\sigma_m \in \sho_X \sigma_m 
\subset V_0( \D_{X \times \CC} ) \sigma_m. 
\end{equation}
Set $\N := \D_{X \times \CC} \sigma_m$ and 
$\N^{\prime} := \D_{X \times \CC} \sigma_{m^{\prime}}$. 
Then the $V$-filtration 
$\{ V_j( \D_{X \times \CC} ) \sigma_m \}_{j \in \ZZ}$ 
(resp. $\{ V_j( \D_{X \times \CC} ) 
\sigma_{m^{\prime}} \}_{j \in \ZZ}$) 
of $\N$ (resp. $\N^{\prime}$) is good. By Artin-Rees's lemma, 
the $V$-filtration $\{ U_j( \N^{\prime} ) \}_{j \in \ZZ}$ 
of $\N^{\prime}$ defined by 
\begin{equation}
U_j( \N^{\prime} ):= \N^{\prime} \cap 
\Bigl( V_j( \D_{X \times \CC} ) \sigma_m 
\Bigr) \qquad (j \in \ZZ )
\end{equation}
is also good and satisfies the condition 
$\sigma_{m^{\prime}} \in U_0( \N^{\prime} )$. 
Then there exists $l \gg 0$ such that 
\begin{equation}
U_{-l-1}( \N^{\prime} ) \subset V_{-1}( \D_{X \times \CC} ) 
\sigma_{m^{\prime}} \subset \N^{\prime}. 
\end{equation}
This implies that we have 
\begin{equation}
p_{\sigma_m}( \theta -l) \cdots p_{\sigma_m}( \theta -1) 
p_{\sigma_m}( \theta ) \sigma_{m^{\prime}}
\in V_{-1}( \D_{X \times \CC} ) 
\sigma_{m^{\prime}}.
\end{equation}
Then the assertion immediately follows. 
\end{proof}

\section{Upper and lower bounds for the roots of 
b-functions}\label{sec:s4}

 Recall that in \cite{K-1} Kashiwara proved that 
if $f$ is holomorphic the roots of 
the Bernstein-Sato polynomial 
$b_f(s)$ are negative rational numbers. 
In this section, we prove an analogous result 
for the meromorphic function $f= \frac{F}{G}$. 
We can easily prove 
that the roots of our b-function 
$b_{f,m}^{{\rm mero}}(s)$ are rational numbers, 
but their negativity does not follow from 
our proof. For this reason, here we only give an upper bound 
\begin{equation}
(b_{f,m}^{{\rm mero}})^{-1}(0) \subset 
B_{f,m}^{\pi} \subset \QQ \qquad (m \geq 0)
\end{equation}
for the set $(b_{f,m}^{{\rm mero}})^{-1}(0)$ 
in terms of resolutions of singularities $\pi$ 
of $D \subset X$. 
The precise statement is as follows. 
Let $\pi : Y \longrightarrow X$ be a 
resolution of singularities 
of the divisor $D=
F^{-1}(0) \cup G^{-1}(0) \subset X$, which means that 
$\pi : Y \longrightarrow X$ is a proper morphism 
of $n$-dimensional 
complex manifolds such that  
$\pi^{-1}(D) \subset Y$ is normal crossing 
and $\pi |_{Y \setminus \pi^{-1}(D)}: 
Y \setminus \pi^{-1}(D) \longrightarrow X 
\setminus D$ is an isomorphism. 
Then we define a meromorphic function $g$ on $Y$ by 
\begin{equation}
g:=f \circ \pi = \frac{F \circ \pi}{G \circ \pi}.
\end{equation}
From now on, we fix a non-negative integer $m \geq 0$ 
and consider the (local) Bernstein-Sato polynomials 
of $g$ of order $m$. At each point $q \in \pi^{-1}(D)$ of 
the normal crossing divisor $\pi^{-1}(D)$, 
there exists a local coordinate system 
$y=(y_1,y_2, \ldots, y_n)$ such that 
$q=(0,0, \ldots , 0)$ and 
\begin{equation}
(F \circ \pi )(y)  =  \prod_{i=1}^n y_i^{a_i} 
\quad (a_i \geq 0), \qquad 
(G \circ \pi )(y)  =  \prod_{i=1}^n y_i^{b_i} 
\quad (b_i \geq 0). 
\end{equation}
Then we have 
\begin{equation}
\bigl( \frac{1}{G^m} f^{s} \Bigr) (y) = 
\prod_{i=1}^n y_i^{(a_i-b_i)s-m b_i}. 
\end{equation}
It follows that the set $K_q \subset \QQ$ 
of the roots of the (local) Bernstein-Sato polynomial 
of $g$ at $q$ is explicitly given by 
\begin{equation}
K_q= \bigcup_{i: \ a_i > b_i} 
\Bigl\{ \frac{m b_i}{a_i-b_i}
 - \frac{k}{a_i-b_i} \ | \ 
1 \leq k \leq a_i-b_i \Bigr\} \subset \QQ
\end{equation}
(see e.g. \cite[Lemma 6.10]{K-book}). 
It is clear this set $K_q$ does not depend 
on the choice of the local coordinates. 
For the point $x_0 \in D$ its inverse 
image $\pi^{-1}(x_0) \subset \pi^{-1}(D)$ 
being compact, we obtain a finite subset 
\begin{equation}
K:= \bigcup_{q \in \pi^{-1}(x_0)} K_q \subset \QQ. 
\end{equation}

\begin{theorem}\label{Main-3} 
For any $m \geq 0$ 
the roots of the (local)  Bernstein-Sato polynomial 
$b_{f,m}^{{\rm mero}}(s)$ of $f$ at $x_0 \in D$ 
are contained in the set 
\begin{equation}
B_{f,m}^{\pi} := 
\bigcup_{l=0,1,2, \ldots} \bigl( K-l \bigr) =
\{ r-l \ | \ r \in K, \ l=0,1,2, \ldots \} \subset \QQ. 
\end{equation}
In particular, for $m=0$ 
the roots of $b_{f,0}^{{\rm mero}}(s)$ 
are negative rational numbers. 
\end{theorem}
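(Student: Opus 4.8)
The plan is to pull everything back along a resolution of singularities $\pi:Y\to X$ of $D$, where $F\circ\pi$ and $G\circ\pi$ are monomials and the Bernstein--Sato polynomial of $g=f\circ\pi$ is the one recorded above, and then to transport the conclusion down to $X$ along the proper map $\pi$ by a $V$-filtration argument, as in Kashiwara's proof \cite{K-1} of the rationality theorem in the holomorphic case. First I work on $Y$: fix $q\in\pi^{-1}(x_0)$ and local coordinates with $(F\circ\pi)(y)=\prod_i y_i^{a_i}$ and $(G\circ\pi)(y)=\prod_i y_i^{b_i}$, so that $\tfrac{1}{(G\circ\pi)^m}g^{s}=\prod_i y_i^{(a_i-b_i)s-mb_i}$. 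Applying $\prod_{i:\,a_i>b_i}\partial_{y_i}^{\,a_i-b_i}$ to $\tfrac{1}{(G\circ\pi)^m}g^{s+1}$ and then multiplying by the regular monomial $\prod_{i:\,a_i<b_i}y_i^{\,b_i-a_i}$ yields $\tfrac{1}{(G\circ\pi)^m}g^{s}$ up to the nonzero polynomial factor $\prod_{i:\,a_i>b_i}\prod_{k=1}^{a_i-b_i}\bigl((a_i-b_i)s+(a_i-b_i)-mb_i-k+1\bigr)$, whose set of roots is exactly $K_q$ (the indices with $a_i=b_i$ are untouched). Hence the local b-function $b_{g,m}^{{\rm mero}}(s)$ of $g$ at $q$ divides this polynomial, so its roots lie in $K_q$; as $\pi^{-1}(x_0)$ is compact, the b-function of $g$ of order $m$ on a neighbourhood of $\pi^{-1}(x_0)$ — the least common multiple over $q\in\pi^{-1}(x_0)$ — has all of its roots in $K=\bigcup_q K_q$.

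For the descent, recall from Sections \ref{sec:s2}--\ref{sec:s3} the regular holonomic $\D_{X\times\CC}$-module $\M$, its section $\sigma_m$, the identity $b_{f,m}^{{\rm mero}}(s)=p_{\sigma_m}(-s-1)$, and $\N:=\D_{X\times\CC}\sigma_m$ with the good $V$-filtration $\{V_j(\D_{X\times\CC})\sigma_m\}$ along $\{t=0\}$; build the analogous $\tl\M$, $\tl\sigma_m$, $\tl\N:=\D_{Y\times\CC}\tl\sigma_m$ on $Y\times\CC$ out of $F\circ\pi,\,G\circ\pi$, so that by the previous paragraph the roots of $p_{\tl\sigma_m}$ over $\pi^{-1}(x_0)$ lie in $\{-\kappa-1\mid\kappa\in K\}$. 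Let $\tl\pi:=\pi\times\id_{\CC}:Y\times\CC\to X\times\CC$; it is proper, an isomorphism over $(X\setminus D)\times\CC$, and preserves $\{t=0\}$ and $\theta=t\partial_t$. Via the adjunction for the proper morphism $\tl\pi$ there is a natural morphism $\HH^0\tl\pi_+\tl\N\to\M$ carrying the pushforward of $\tl\sigma_m$ to a nonzero multiple of $\sigma_m$ (it is the identity over the isomorphism locus, and $\M$ has no nonzero section supported on $D\times\CC$, being a localization along $Z$ of a local cohomology module); set $\mathcal{Q}:=\{\rm Im}(\HH^0\tl\pi_+\tl\N\to\M)$, a coherent $\D_{X\times\CC}$-submodule of $\M$ with $\sigma_m\in\mathcal{Q}$, so $\N\subseteq\mathcal{Q}$. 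By the strictness of $\tl\pi_+$ with respect to the $V$-filtration along $\{t=0\}$ (see \cite{M-S}, \cite{Sabbah-3}), the images in $\mathcal{Q}$ of $\tl\pi_+(V_j(\D_{Y\times\CC})\tl\sigma_m)$ form a good $V$-filtration $\{W_j\}_{j\in\ZZ}$ of $\mathcal{Q}$ with $\sigma_m\in W_0$, and pushing the upstairs relation $p_{\tl\sigma_m}(\theta)\,V_0(\D_{Y\times\CC})\tl\sigma_m\subseteq V_{-1}(\D_{Y\times\CC})\tl\sigma_m$ forward through $\tl\pi_+$ (which commutes with $\theta$) gives $p_{\tl\sigma_m}(\theta)W_0\subseteq W_{-1}$. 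Iterating and shifting $\theta$ exactly as in the proof of Lemma \ref{Lemma-4} yields $p_{\tl\sigma_m}(\theta-l)\cdots p_{\tl\sigma_m}(\theta)\,\sigma_m\in W_{-l-1}$ for all $l\ge0$; since $\{W_j\cap\N\}$ and $\{V_j(\D_{X\times\CC})\sigma_m\}$ are good $V$-filtrations of $\N$, the Artin--Rees lemma provides $L\gg0$ with $W_{-L-1}\cap\N\subseteq V_{-1}(\D_{X\times\CC})\sigma_m$, hence
\[
p_{\tl\sigma_m}(\theta-L)\cdots p_{\tl\sigma_m}(\theta-1)\,p_{\tl\sigma_m}(\theta)\,\sigma_m\in V_{-1}(\D_{X\times\CC})\sigma_m .
\]
Therefore $p_{\sigma_m}(s)$ divides $\prod_{j=0}^{L}p_{\tl\sigma_m}(s-j)$, so the roots of $p_{\sigma_m}$ lie in $\{-\kappa-1\mid\kappa\in K\}+\{0,1,\dots,L\}$, and translating through $b_{f,m}^{{\rm mero}}(s)=p_{\sigma_m}(-s-1)$ the roots of $b_{f,m}^{{\rm mero}}(s)$ lie in $K-\ZZ_{\ge0}=B_{f,m}^{\pi}$.

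Finally, for $m=0$ the formula for $K_q$ becomes $\{-\tfrac{k}{a_i-b_i}\mid a_i>b_i,\ 1\le k\le a_i-b_i\}\subset[-1,0)$, so $K\subset[-1,0)$ and $B_{f,0}^{\pi}=\bigcup_{l\ge0}(K-l)\subset\QQ_{<0}$, i.e.\ the roots of $b_{f,0}^{{\rm mero}}(s)$ are negative rational numbers. The step that needs the most care is the comparison in the $\tfrac1G$-localized (meromorphic) setting between $\M$ on $X\times\CC$ and $\tl\pi_+\tl\M$ on $Y\times\CC$: that $\M$ has no section supported on $D\times\CC$, the construction of the morphism $\HH^0\tl\pi_+\tl\N\to\M$ matching $\sigma_m$ with the pushforward of $\tl\sigma_m$, and the $V$-filtration bookkeeping that turns the single upstairs functional equation into the semi-infinite family of shifts $\bigcup_{l\ge0}(K-l)$; when $G=1$ all of this is Kashiwara's classical argument \cite{K-1}.
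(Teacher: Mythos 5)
Your proposal is correct and follows essentially the same route as the paper: compute the functional equation explicitly in the normal crossing coordinates upstairs, transport the resulting polynomial $p_{\tl\sigma_m}$ through the proper direct image $\tl\pi_+$ using the compatibility of good $V$-filtrations with pushforward (Mebkhout--Sabbah) and the adjunction morphism onto $\M$, and finish with Artin--Rees to account for the integer shifts. The only difference is cosmetic (the paper factors $\tl\pi$ through the graph embedding and spells out the identification $\Psi(\sigma')=\sigma$ via unique continuation plus the Nullstellensatz, while you supply the explicit monomial computation of $K_q$ that the paper merely asserts).
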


\begin{proof}
Our proof is similar to the one in 
Kashiwara \cite{K-1} for the case where 
$f$ is holomorphic. But we also need some 
new ideas to treat the meromorphic case. 
Recall that for the section $\sigma := \sigma_m \in \M$ 
(see \eqref{EQ-2}) of the 
regular holonomic $\D_{X \times \CC}$-module 
$\M$ we denote by $p_{\sigma}(s) \in \CC [s]$ the 
minimal polynomial $p(s) \not= 0$ such that 
\begin{equation}
p( \theta ) \sigma \in V_{-1}( \D_{X \times \CC} ) 
\sigma
\end{equation}
and we have $b_{f,m}^{{\rm mero}}(s) = 
p_{\sigma}(-s-1)$. Let $i: Y \longrightarrow Y \times X$ 
($y \longmapsto (y, \pi (y))$) be the graph 
embedding by $\pi$ and $p: Y \times X \rightarrow X$ 
($(y, x) \longmapsto x$) the projection such 
that $\pi = p \circ i$. We set also 
\begin{align*}
\tilde{i} :=i \times {\rm id}_{\CC} 
: Y \times \CC \longrightarrow (Y \times X) \times \CC, 
\\
\tilde{p} :=p \times {\rm id}_{\CC} 
: (Y \times X) \times \CC \longrightarrow X \times \CC
\end{align*}
so that we have $\tilde{\pi} := \pi \times {\rm id}_{\CC} 
= \tilde{p} \circ \tilde{i}$. As in the case 
of the meromorphic function $f= \frac{F}{G}$, 
we define a regular holonomic $\D_{Y \times \CC}$-module 
$\N$ associated to $g= \frac{F \circ \pi}{G \circ \pi}$ 
and its section 
\begin{equation}
\tau := \frac{1}{\{ (G \circ \pi )(y) \}^m} \delta 
(t-g(y)) \in \N. 
\end{equation}
Then the roots of its minimal 
polynomial $p_{\tau}(s) \in \CC [s]$ such that 
\begin{equation}
p_{\tau}( \theta ) \tau \in 
V_{-1}( \D_{Y \times \CC} ) \tau
\end{equation}
is contained in the set 
$\{ -r-1 \ | \ r \in K \} \subset \QQ$. 
Since $\tilde{\pi} : Y \times \CC \longrightarrow 
X \times \CC$ is an isomorphism over 
$(Y \setminus \pi^{-1}(D)) \times \CC \simeq 
(X \setminus D) \times \CC$, the section $\tau \in \N$ 
is naturally identified with $\sigma \in \M$ there. Let 
\begin{equation}
\bfD \tilde{i}_* \N \simeq H^0 \bfD \tilde{i}_* \N 
= \tilde{i}_* \Bigl( \D_{(Y \times X) \times \CC \leftarrow 
Y \times \CC} \otimes_{\D_{Y \times \CC}} \N  \Bigr) 
\end{equation}
be the direct image of $\N$ by $\tilde{i}$ and 
\begin{equation}
\widetilde{\tau} := 
1_{(Y \times X) \times \CC \leftarrow Y \times \CC}
\otimes \tau \in \bfD \tilde{i}_* \N 
\end{equation}
its section defined by $\tau \in \N$. 
Then it is easy to see that the minimal 
polynomial $p_{\widetilde{\tau}}(s) \in \CC [s]$ such that 
\begin{equation}
p_{\widetilde{\tau}}( \theta ) \widetilde{\tau} \in 
V_{-1}( \D_{(Y \times X) \times \CC} ) \widetilde{\tau}
\end{equation}
is equal to $p_{\tau}(s)$. 
Let us consider the $\D_{Y \times \CC}$-submodule 
$\N_0:= \D_{Y \times \CC} \tau \subset \N$ of $\N$ 
generated by $\tau \in \N$. Then we have $\widetilde{\tau}
 \in \bfD \tilde{i}_* \N_0 \subset \bfD \tilde{i}_* \N$ and 
$\D_{(Y \times X) \times \CC} \widetilde{\tau} 
= \bfD \tilde{i}_* \N_0$. Hence we can define a 
good $V$-filtration $\{ U_j( \bfD \tilde{i}_* \N_0 ) 
\}_{j \in \ZZ}$ 
of $\bfD \tilde{i}_* \N_0$ by 
\begin{equation}
 U_j( \bfD \tilde{i}_* \N_0 ):= 
V_j( \D_{(Y \times X) \times \CC} ) \widetilde{\tau} 
\qquad (j \in \ZZ ).
\end{equation}
Then it is easy to see that for any $j \in \ZZ$ we have 
\begin{equation}
p_{\tau}( \theta +j)  U_j( \bfD \tilde{i}_* \N_0 ) \subset 
 U_{j-1}( \bfD \tilde{i}_* \N_0 ).  
\end{equation}
Let us consider the 
relationship between $p_{\widetilde{\tau}}(s) = p_{\tau}(s)$ 
and $p_{\sigma}(s)= b_{f,m}^{{\rm mero}}(-s-1)$. 
For this purpose, let 
\begin{equation}
\M^{\prime} := H^0 \bfD \widetilde{\pi}_* \N_0 \simeq 
H^0 \bfD \tilde{p}_* ( \bfD \tilde{i}_* \N_0 )
\end{equation}
be the 0-th direct image of $\bfD \tilde{i}_* \N_0$ by 
$\tilde{p}$ and as in Gyoja \cite[Section 4.2]{G} 
define its section $\sigma^{\prime} \in \M^{\prime}$ 
to be the image of a section $1_{X \leftarrow Y} \otimes \tau 
\in \tilde{p}_* [ \Omega^n_{Y \times X/X} \otimes_{\sho_{Y \times X}}
\bfD \tilde{i}_* \N_0 ]$ by the morphism 
\begin{equation*}
\tilde{p}_* \bigl[ \Omega^n_{Y \times X/X} 
\otimes_{\sho_{Y \times X}} \bfD \tilde{i}_* \N_0 \bigr] 
\longrightarrow H^0 \bfD \tilde{p}_* ( \bfD \tilde{i}_* \N_0 ) 
= \M^{\prime}
\end{equation*} 
For the construction of 
$1_{X \leftarrow Y} \otimes \tau$, see 
\cite[Section 4.2]{G} for the details. 
Note that on the open subset $(X \setminus D) \times \CC$ 
of $X \times \CC$ we have $\M^{\prime} = 
\D_{X \times \CC} \sigma = \M$ and 
$\sigma^{\prime}$ coincides with $\sigma$. 
For $j \in \ZZ$ we denote by $U_j( \M^{\prime} ) \subset 
\M^{\prime}=H^0 \bfD \tilde{p}_* ( \bfD \tilde{i}_* \N_0 )$ 
the image of the natural morphism 
\begin{equation}
H^0 R \tilde{p}_* \Bigl\{ 
{\rm DR}_{Y \times X/X}
\bigl(   U_j( \bfD \tilde{i}_* \N_0 ) \bigr) \Bigr\} 
\longrightarrow \M^{\prime}. 
\end{equation}
Then by the proof of \cite[Th\'eor\`eme 4.8.1 (1)]{M-S}, 
$\{ U_j( \M^{\prime} ) \}_{j \in \ZZ}$ is a 
good $V$-filtration of $\M^{\prime}$ and for any 
$j \in \ZZ$ we have 
\begin{equation}
p_{\tau}( \theta +j)  U_j( \M^{\prime} ) \subset 
 U_{j-1}( \M^{\prime} ).  
\end{equation}
Moreover by our construction, the section 
$\sigma^{\prime} \in \M^{\prime}$ is contained 
in $U_0( \M^{\prime} )$. Let 
\begin{equation}
\M^{\prime \prime} := 
\D_{X \times \CC} \sigma^{\prime}  \subset \M^{\prime} 
\end{equation}
be the $\D_{X \times \CC}$-submodule of $\M^{\prime}$ 
generated by $\sigma^{\prime}$. Then 
by Artin-Rees's lemma, the $V$-filtration 
$\{ U_j( \M^{\prime \prime} ) \}_{j \in \ZZ}$ of 
$\M^{\prime \prime}$ defined by 
\begin{equation}
U_j( \M^{\prime \prime} ) := \M^{\prime \prime} \cap 
U_j( \M^{\prime} ) \qquad (j \in \ZZ )
\end{equation}
is also good and hence there exists $l \gg 0$ such that 
\begin{equation}
U_{-l}( \M^{\prime \prime} ) = 
\M^{\prime \prime} \cap U_{-l}( \M^{\prime} ) 
\subset V_{-1} ( \D_{X \times \CC} ) \sigma^{\prime}. 
\end{equation}
Combining these results together, we get 
\begin{equation}
p_{\tau}( \theta -(l-1)) \cdots p_{\tau}( \theta -1) p_{\tau}( \theta )
\sigma^{\prime}
\in \M^{\prime \prime} \cap U_{-l}( \M^{\prime} ) 
\subset V_{-1} ( \D_{X \times \CC} ) \sigma^{\prime}. 
\end{equation}
This implies that the minimal polynomial 
$p_{\sigma^{\prime}} (s) \in \CC [s]$ 
for the section $\sigma^{\prime} \in \M^{\prime}$ 
divides the product 
\begin{equation}
p_{\tau}( s -(l-1)) \cdots p_{\tau}( s -1) p_{\tau}( s ) 
\in \CC [s]. 
\end{equation}
Now, according to Kashiwara \cite{K-book}, there exists 
an adjunction morphism 
\begin{equation}
\bfD \widetilde{\pi}_* ( \bfD \widetilde{\pi}^* \M ) 
\longrightarrow  \M 
\end{equation}
of $\D_{X \times \CC}$-modules. Since $\bfD \widetilde{\pi}^* \M$ 
is isomorphic to $\N$ (use e.g. the Riemann-Hilbert correspondence) 
and $\N_0 \subset \N$, 
we obtain a morphism 
\begin{equation}
\Psi : \M^{\prime} = H^0 \bfD \widetilde{\pi}_* \N_0  
\longrightarrow  \M
\end{equation}
of $\D_{X \times \CC}$-modules. 
Then the section $\Psi ( \sigma^{\prime} ) \in \M$ 
of $\M$ coincides with $\sigma \in \M$ on the 
open subset $(X \setminus D) \times \CC \subset 
X \times \CC$. Moreover by the isomorphism 
$\M \simeq \D_X [ \partial_t ]$ on 
the open subset 
$(X \setminus G^{-1}(0)) \times \CC \subset 
X \times \CC$, this coincidence can be extended 
to $(X \setminus G^{-1}(0)) \times \CC$. Here 
we used the classical theorem on the unique 
continuation of holomorphic functions. Since we have 
$\M \simeq \M [ \frac{1}{G} ]$, by Hilbert's nullstellensatz 
we get $\Psi ( \sigma^{\prime} ) = \sigma$ on the 
whole $X \times \CC$. This implies that the minimal 
polynomial $p_{\sigma} (s) = b_{f,m}^{{\rm mero}}(-s-1)$ 
divides the one $p_{\sigma^{\prime}} (s)$. Now the 
assertion is clear. This completes the proof. 
\end{proof}

We have seen that the roots of our b-functions 
$b_{f,m}^{{\rm mero}}(s)$ are rational numbers. 
Let $\rho : \QQ \longrightarrow \QQ / \ZZ$ be the 
quotient map. Then Lemma \ref{Lemma-4} means 
that the subset $A_m:= \rho \ \{ (b_{f,m}^{{\rm mero}})^{-1}(0) \}  
\subset \QQ / \ZZ$ increases with respect to 
$m \geq 0$. By Theorem \ref{Main-2} this sequence 
is stationary for $m \geq 2 \dim X$. 

Next we shall give a lower bound for the 
subsets $(b_{f,m}^{{\rm mero}})^{-1}(0) \subset \QQ$. 
In the proof of Theorem \ref{Main-1}, we have seen 
that the minimal polynomial of $s$ acting on the 
$\D_X$-module
\begin{equation}
\K \simeq 
\frac{  \sum_{k=0}^{+ \infty} \D_X [s]
( \frac{1}{G^m} f^{s+k} ) }{ 
\sum_{k=1}^{+ \infty} \D_X [s]
( \frac{1}{G^m} f^{s+k} ) }
\end{equation}
is equal to our b-function $b_{f,m}^{{\rm mero}}(s)$. 
Localizing it along the hypersurface $G^{-1}(0) \subset 
X$ we obtain a new $\D_X$-module 
\begin{equation}
\K \Bigl[ \frac{1}{G} \Bigr] \simeq 
\frac{  \{ \D_X [s]( \frac{1}{G^m} f^{s}) \} 
[ \frac{1}{G} ] }{ 
\{ \D_X [s]( \frac{1}{G^m} f^{s+1}) \} 
[ \frac{1}{G} ]}
\end{equation}
on which $s$ still acts. Obviously, we have 
$b_{f,m}^{{\rm mero}}(s)=0$ on $\K [ \frac{1}{G} ]$. 
By this observation, we obtain the following result. 
We denote the localized ring $\D_X [ \frac{1}{G} ]$ 
simply by $\widetilde{\D}_X$. 

\begin{theorem}\label{Main-4} 
Let $m \geq 0$ be a non-negative integer. Then 
there exists a non-zero polynomial $b(s) \in \CC [s]$ 
satisfying the equation
\begin{equation}\label{rfe-1}
b(s) \Bigl( \frac{1}{G^m} f^{s} \Bigr) = 
\tilde{P} (s) 
\Bigl( \frac{1}{G^m} f^{s+1} \Bigr)
\end{equation}
for some $\tilde{P} (s) \in \widetilde{\D}_X [s]$. 
\end{theorem}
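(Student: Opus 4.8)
The statement is a quick consequence of Theorem \ref{Main-1}, obtained by inverting $G$, exactly as in the localized module $\K[\frac{1}{G}]$ introduced just above. The underlying observation is that once $G$ is made invertible the sections $\frac{1}{G^m}f^{s+k}$ with $k\geq 1$ all become multiples of $\frac{1}{G^m}f^{s+1}$ by the factor $\frac{F^{k-1}}{G^{k-1}}\in\sho_X[\frac{1}{G}]\subset\widetilde{\D}_X$; hence the finite sum on the right hand side of \eqref{fe-1} collapses to a single term, at the cost of enlarging the coefficient ring from $\D_X[s]$ to $\widetilde{\D}_X[s]$. So the plan is simply to take $b(s)=b_{f,m}^{{\rm mero}}(s)$ and to rewrite the defining relation of Theorem \ref{Main-1}.

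In detail: first I would record that, since $\sho_X[\frac{1}{FG}]$ already contains $\frac{1}{G}$, multiplication by $G$ is bijective on $\LL=(\sho_X[\frac{1}{FG}])[s]f^{s}$, so that $\LL$ is naturally a left $\widetilde{\D}_X[s]$-module extending its $\D_X[s]$-module structure. Applying Theorem \ref{Main-1} with $b(s)=b_{f,m}^{{\rm mero}}(s)$, I fix $P_1(s),\ldots,P_N(s)\in\D_X[s]$ with
\[
b_{f,m}^{{\rm mero}}(s)\Bigl(\frac{1}{G^m}f^{s}\Bigr)=\sum_{k=1}^{N}P_k(s)\Bigl(\frac{1}{G^m}f^{s+k}\Bigr)
\]
in $\LL$. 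For each $1\leq k\leq N$ one has in $\LL$ the identity $\frac{1}{G^m}f^{s+k}=\frac{F^{k-1}}{G^{k-1}}\cdot\bigl(\frac{1}{G^m}f^{s+1}\bigr)$, valid because $\frac{F^{k-1}}{G^{k-1}}\in\sho_X[\frac{1}{FG}]$, and by associativity of the $\widetilde{\D}_X[s]$-action $P_k(s)\bigl(\frac{F^{k-1}}{G^{k-1}}\cdot(\frac{1}{G^m}f^{s+1})\bigr)=\tilde{P}_k(s)\bigl(\frac{1}{G^m}f^{s+1}\bigr)$, where $\tilde{P}_k(s)\in\widetilde{\D}_X[s]$ is the composite in $\widetilde{\D}_X[s]$ of $P_k(s)$ with multiplication by $\frac{F^{k-1}}{G^{k-1}}$. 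Summing over $k$ yields \eqref{rfe-1} with $\tilde{P}(s)=\sum_{k=1}^{N}\tilde{P}_k(s)\in\widetilde{\D}_X[s]$. This proves the theorem with $b(s)=b_{f,m}^{{\rm mero}}(s)$; in fact it follows that the minimal $b(s)\not=0$ satisfying \eqref{rfe-1} divides $b_{f,m}^{{\rm mero}}(s)$.

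Equivalently, the whole content is the vanishing $b_{f,m}^{{\rm mero}}(s)=0$ on $\K[\frac{1}{G}]$ noted before the statement: under the isomorphism $\K[\frac{1}{G}]\simeq\{\D_X[s](\frac{1}{G^m}f^{s})\}[\frac{1}{G}]/\{\D_X[s](\frac{1}{G^m}f^{s+1})\}[\frac{1}{G}]$ together with the identification $\{\D_X[s](\frac{1}{G^m}f^{s+1})\}[\frac{1}{G}]=\widetilde{\D}_X[s](\frac{1}{G^m}f^{s+1})$, this vanishing reads precisely as \eqref{rfe-1}. I do not expect any genuine obstacle here; the only point needing (routine) care is the bookkeeping that $\LL$, the submodules $\{\D_X[s](\frac{1}{G^m}f^{s+k})\}[\frac{1}{G}]$ and the coefficients are all left $\widetilde{\D}_X[s]$-modules, and that composing an operator in $\D_X[s]$ with multiplication by a section of $\sho_X[\frac{1}{G}]$ again gives an element of $\widetilde{\D}_X[s]$, which is clear since $\widetilde{\D}_X$ is a ring.
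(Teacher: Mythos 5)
Your proposal is correct and follows essentially the same route as the paper: the paper derives Theorem \ref{Main-4} from the observation, stated just before it, that $b_{f,m}^{{\rm mero}}(s)$ acts as zero on the localization $\K[\frac{1}{G}]$, i.e.\ by taking $b(s)=b_{f,m}^{{\rm mero}}(s)$ and noting that after inverting $G$ the modules $\{\D_X[s](\frac{1}{G^m}f^{s+k})\}[\frac{1}{G}]$ for $k\geq 1$ all sit inside $\widetilde{\D}_X[s](\frac{1}{G^m}f^{s+1})$. Your explicit bookkeeping with the factors $\frac{F^{k-1}}{G^{k-1}}\in\sho_X[\frac{1}{G}]\subset\widetilde{\D}_X$ is exactly the content of that collapse, and your remark that the minimal such $b(s)$ divides $b_{f,m}^{{\rm mero}}(s)$ matches the paper's subsequent claim that $\tilde{b}_{f}^{{\rm mero}}(s)$ divides $b_{f,m}^{{\rm mero}}(s)$.
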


\begin{definition}\label{RBS-fun}
For $m \geq 0$ we denote by $\tilde{b}_{f,m}^{{\rm mero}}(s) 
\in \CC [s]$ the minimal polynomial 
satisfying the equation in Theorem \ref{Main-4} 
and call it the reduced Bernstein-Sato polynomial 
or the reduced b-function of $f$ of order $m$. 
\end{definition}

Since $\tilde{P} (s) \in \widetilde{\D}_X [s]$ in the 
equation \eqref{rfe-1} can be rewritten as 
\begin{equation}
\tilde{P} (s)= \frac{1}{G^m} \circ \tilde{Q} (s) 
\circ G^m \qquad ( \tilde{Q} (s) \in \widetilde{\D}_X [s]), 
\end{equation}
in fact the condition on $b(s)$ in Theorem \ref{Main-4} 
is equivalent to the existence of some 
$\tilde{Q} (s) \in \widetilde{\D}_X [s]$ satisfying 
the simpler equation 
\begin{equation}
b(s) f^{s} = \tilde{Q} (s)  f^{s+1} 
\end{equation}
independent of $m \geq 0$. This shows that we have 
\begin{equation}
\tilde{b}_{f,0}^{{\rm mero}}(s) = 
\tilde{b}_{f,1}^{{\rm mero}}(s) = 
\tilde{b}_{f,2}^{{\rm mero}}(s) = \cdots \cdots \cdots.
\end{equation}
Therefore we denote $\tilde{b}_{f,m}^{{\rm mero}}(s)$ 
simply by $\tilde{b}_{f}^{{\rm mero}}(s)$. 
Then by our construction, for any $m \geq 0$ 
our b-function $b_{f,m}^{{\rm mero}}(s)$ is 
divided by the reduced one $\tilde{b}_{f}^{{\rm mero}}(s)$. 
We thus obtain a lower bound 
\begin{equation}
( \tilde{b}_{f}^{{\rm mero}})^{-1}(0)
\subset 
(b_{f,m}^{{\rm mero}})^{-1}(0) \subset \QQ
\end{equation}
for the subset $(b_{f,m}^{{\rm mero}})^{-1}(0) \subset \QQ$. 
Several authors studied (global) b-functions on algebraic 
varieties. In particular, the result in \cite{M-N} 
ensures the existence of b-functions on smooth 
affine varieties. See \cite{A-J-N} for a review on 
this subject. Since for algebraic $X$ and $f= \frac{F}{G}$ 
the variety $X \setminus G^{-1}(0)$ is affine, 
our Theorem \ref{Main-4} could be considered 
as an analytic counterpart of their result 
in a very special case. 

\begin{remark}
It looks that the $\D_X$-modules $\K$ 
and $\K [ \frac{1}{G} ]$ above are regular holonomic, 
but we could not prove it. We conjecture that 
they are regular holonomic. 
\end{remark}

From now on, we consider the special case where 
the meromorphic function $f= \frac{F}{G}$ is 
quasi-homogeneous. More precisely, for 
a local coordinate system $x=(x_1,x_2, \ldots, x_n)$ 
of $X$ such that $x_0= \{ x=0 \}$, we assume 
that there exist a vector field 
$v= \sum_{i=1}^n w_i x_i \partial_{x_i} \in \D_X$ 
($w=(w_1,w_2, \ldots, w_n) \in \ZZ_{\geq 0}^n 
\setminus \{ 0 \}$ is a weight vector) and 
$d_1,d_2 \in \ZZ_{>0}$ with $d:=d_1-d_2 \not= 0$ 
such that 
\begin{equation}
vF=d_1 \cdot F, \quad vG=d_2 \cdot G, 
\quad vf=d \cdot f \not= 0.
\end{equation}
Let us calculate $\tilde{b}_{f}^{{\rm mero}}(s)$ 
of such $f$ following the arguments in 
\cite[Section 6.4]{K-book}. 
First, by the condition $vf=d \cdot f$ 
($d \not= 0$) we have isomorphisms 
\begin{equation}
\widetilde{\D}_X [s] f^{s} \simeq 
\widetilde{\D}_X f^{s}, \quad 
\K \Bigl[ \frac{1}{G} \Bigr] \simeq 
\frac{ \widetilde{\D}_X f^{s} }{ \widetilde{\D}_X f^{s+1} }
\end{equation}
and for our reduced b-function $\tilde{b}_{f}^{{\rm mero}}(s)$ 
there exists $\tilde{P} \in \widetilde{\D}_X$ 
such that 
\begin{equation}\label{FEE-1}
\tilde{b}_{f}^{{\rm mero}}(s) f^s= 
\tilde{P} f^{s+1}. 
\end{equation}
In this situation, by the proof of 
\cite[Lemma 6.6]{K-book} we see that 
$\K [ \frac{1}{G} ]$ is a holonomic $\D_X$-module and 
Theorem \ref{Main-4} can be proved also by 
using the trick in the proof of 
\cite[Theorem 6.7]{K-book}. 
If we set $s=-1$ in \eqref{FEE-1} we obtain 
\begin{equation}\label{FEE-2}
\tilde{b}_{f}^{{\rm mero}}(-1)= f \tilde{P}(1). 
\end{equation}
Restricting this equality to the subset 
$F^{-1}(0) \setminus G^{-1}(0) \subset 
X \setminus G^{-1}(0)$ we see that 
$\tilde{b}_{f}^{{\rm mero}}(-1)=0$. 
Namely, for a non-zero polynomial 
$\tilde{\beta}_{f}^{{\rm mero}}(s) \in \CC [s]$ 
we have 
\begin{equation}
\tilde{b}_{f}^{{\rm mero}}(s)= (s+1)  \cdot 
\tilde{\beta}_{f}^{{\rm mero}}(s). 
\end{equation}
On the other hand, by \eqref{FEE-2} we 
have $\tilde{P}(1)=0$ and hence 
$\tilde{P} \in \sum_{i=1}^n \widetilde{\D}_X 
\partial_{x_i}$. Namely there exist 
$\tilde{Q}_i \in \widetilde{\D}_X$ ($1 \leq i \leq n$) 
such that $\tilde{P} = \sum_{i=1}^n \tilde{Q}_i 
\partial_{x_i}$. Moreover, if we set 
\begin{equation}
f_i:=f_{x_i}= \frac{\partial f}{\partial x_i}
= \frac{F_{x_i}G-FG_{x_i}}{G^2} \quad 
(1 \leq i \leq n),
\end{equation}
then we have 
\begin{equation}
\partial_{x_i}f^{s+1}=(s+1)f_if^s 
 \quad (1 \leq i \leq n). 
\end{equation}
Therefore we obtain 
\begin{equation}\label{FEE-3}
\tilde{\beta}_{f}^{{\rm mero}}(s) f^s
= \sum_{i=1}^n \tilde{Q}_i f_if^s. 
\end{equation}
Conversely, for $\tilde{Q}_i \in \widetilde{\D}_X$ 
($1 \leq i \leq n$) satisfying this equality 
the differential operator $\tilde{P} = 
\sum_{i=1}^n \tilde{Q}_i 
\partial_{x_i} \in \widetilde{\D}_X$ 
satisfies the one \eqref{FEE-1}. 
Consequently, our $\tilde{\beta}_{f}^{{\rm mero}}(s) 
\not= 0$ is the minimal polynomial 
$b(s) \in \CC [s]$ satisfying the condition 
$b(s)f^s \in \sum_{i=1}^n \widetilde{\D}_X f_if^s$. 
Since we have 
\begin{equation}
v(f_if^s)=(d-w_i+d \cdot s)(f_if^s) \quad (1 \leq i \leq n),
\end{equation}
$\sum_{i=1}^n \widetilde{\D}_X f_if^s$ is a 
$\widetilde{\D}_X [s]$-submodule of 
$\widetilde{\D}_X f^s \simeq \widetilde{\D}_X [s] f^s$. Set $h_i:= 
F_{x_i}G-FG_{x_i}=G^2f_i\in \sho_X$ 
($1 \leq i \leq n$). Then the $\widetilde{\D}_X$-module 
\begin{equation}
\widetilde{\R}:= 
\frac{ \widetilde{\D}_X f^{s} }{\sum_{i=1}^n 
\widetilde{\D}_X f_if^{s} } 
\simeq 
\frac{ \widetilde{\D}_X f^{s} }{\sum_{i=1}^n 
\widetilde{\D}_X h_if^{s} } 
\end{equation}
has an action of $s$ and the minimal polynomial 
of $s$ on it is equal to $\tilde{\beta}_{f}^{{\rm mero}}(s)$. 

\begin{proposition}\label{PROP-1} 
Let $f= \frac{F}{G}$ be as above and assume moreover 
that $f^{-1}(0)=F^{-1}(0) \setminus G^{-1}(0) 
\subset X \setminus G^{-1}(0)$ is smooth. 
Then $\widetilde{\R}=0$ and 
$\tilde{b}_{f}^{{\rm mero}}(s)=s+1$. 
\end{proposition}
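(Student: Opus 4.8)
The plan is to prove that $\widetilde{\R}=0$; granting this, the minimal polynomial $\tilde{\beta}_{f}^{{\rm mero}}(s)$ of the action of $s$ on $\widetilde{\R}$ is $1$, so $\tilde{b}_{f}^{{\rm mero}}(s)=(s+1)\cdot\tilde{\beta}_{f}^{{\rm mero}}(s)=s+1$ by the discussion preceding the proposition. Now $\widetilde{\R}$ is a coherent $\widetilde{\D}_X$-module, being a quotient of the cyclic module $\widetilde{\D}_Xf^s$, which is $\widetilde{\D}_X$-coherent exactly as in the classical theory (see \cite{H-T-T}, \cite{K-book}); moreover it is generated over $\widetilde{\D}_X$ by the class of $f^s$. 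Hence it suffices to show that this generator vanishes locally at every point of $X$, i.e.\ that $\supp\widetilde{\R}=\emptyset$.

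The key input is the Euler relation for the quasi-homogeneous function $f$. From $vf=d\cdot f$ with $v=\sum_{i=1}^n w_ix_i\partial_{x_i}$ and $d=d_1-d_2\neq 0$ we get $\sum_{i=1}^n w_ix_if_i=d\cdot f$, and multiplying by $f^s$,
\[
f^{s+1}=\frac{1}{d}\sum_{i=1}^n w_ix_i\,f_if^s\in\sum_{i=1}^n\widetilde{\D}_Xf_if^s ,
\]
so the class of $f^{s+1}$ already vanishes in $\widetilde{\R}$.

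Next I would go through the points of $X$. At a point $p$ with $F(p)\neq 0$ the function $\frac{G}{F}=\frac{1}{f}$ is holomorphic near $p$, and $f^s=\frac{G}{F}\cdot f^{s+1}$ in $\LL$, so the class of $f^s$ vanishes at $p$ by the relation above; thus $\widetilde{\R}$ vanishes on $X\setminus F^{-1}(0)$. At a point of $f^{-1}(0)=F^{-1}(0)\setminus G^{-1}(0)$, the smoothness assumption means that $df\neq 0$ there (equivalently, $f$ is part of a local coordinate system), so some $f_j$ is a unit nearby and $f^s=f_j^{-1}(f_jf^s)\in\sum_i\widetilde{\D}_Xf_if^s$ since $f_jf^s$ is one of the generators. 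Combining these, $\supp\widetilde{\R}\subset F^{-1}(0)\cap G^{-1}(0)\subset G^{-1}(0)$.

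Finally, $\widetilde{\R}$ is a coherent $\D_X[\frac{1}{G}]$-module supported on the hypersurface $\{G=0\}$; since every local section of a coherent $\D_X$-module supported on $\{G=0\}$ is killed by some power of $G$, whereas $G$ acts invertibly on $\widetilde{\R}$, we conclude $\widetilde{\R}=0$, as desired. The main obstacle is this very last step, i.e.\ handling the points of $F^{-1}(0)\cap G^{-1}(0)$ --- which includes $x_0$ itself, since $G$ is quasi-homogeneous of positive degree --- where the elementary ``division'' arguments above are unavailable and one genuinely needs coherence of $\widetilde{\R}$ over $\widetilde{\D}_X$ together with invertibility of $G$. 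One should also keep in mind that ``$f^{-1}(0)$ smooth'' must be understood as $df\neq 0$ along $f^{-1}(0)$; with only the underlying set smooth the conclusion already fails in the holomorphic case $G=1$ (e.g.\ $F=x_1^2$).
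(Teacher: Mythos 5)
Your geometric input is exactly the paper's: the Euler relation $\sum_i w_ix_if_i=d\cdot f$ with $d\neq 0$ rules out critical points of $f$ where $f\neq 0$, and smoothness of $f^{-1}(0)$ (correctly read as $df\neq 0$ along it, as you note) rules them out where $f=0$; the whole difficulty is then concentrated on $F^{-1}(0)\cap G^{-1}(0)$, which contains $x_0$. But your closing step there has a genuine gap. The fact you invoke --- that every local section of a coherent $\D_X$-module supported on $\{G=0\}$ is killed by a power of $G$ --- is proved via good filtrations and characteristic varieties and is a statement about \emph{$\D_X$-coherent} modules. What you assert for $\widetilde{\R}$ is coherence over the localized ring $\widetilde{\D}_X=\D_X[\frac{1}{G}]$, and a $\widetilde{\D}_X$-coherent module need not be $\D_X$-coherent; this is precisely the difficulty the paper acknowledges in the Remark of Section \ref{sec:s4}, where it says it cannot even prove that $\K[\frac{1}{G}]$ is regular holonomic. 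Nor does the coherence of $\widetilde{\D}_Xf^s$ follow ``exactly as in the classical theory'': the classical coherence statements concern $\D_X[s]f^s$ via the graph embedding, not modules over $\widetilde{\D}_X$. So, as written, the points of $F^{-1}(0)\cap G^{-1}(0)$ are not handled.

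The paper closes this by a purely commutative argument that needs no coherence property of $\widetilde{\R}$ whatsoever. Put $h_i:=F_{x_i}G-FG_{x_i}=G^2f_i\in\sho_X$, so that $\sum_i\widetilde{\D}_Xf_if^s=\sum_i\widetilde{\D}_Xh_if^s$ and $\widetilde{\R}$ is a quotient of the finitely presented module $\widetilde{\SH}=\widetilde{\D}_X/\sum_i\widetilde{\D}_Xh_i$ via $P\mapsto Pf^s$. Your two inputs (Euler relation where $f\neq 0$, smoothness where $f=0$) say exactly that the $h_i$ have no common zero on $X\setminus G^{-1}(0)$, i.e.\ $V(h_1,\dots,h_n)\subset G^{-1}(0)$. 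The ordinary analytic Nullstellensatz applied to the ideal $(h_1,\dots,h_n)\subset\sho_X$ then gives $G^N\in(h_1,\dots,h_n)$ locally, hence $1=G^{-N}\cdot G^N\in\sum_i\widetilde{\D}_Xh_i$, hence $\widetilde{\SH}=0$ and therefore $\widetilde{\R}=0$. If you replace your final step by this Nullstellensatz argument, the rest of your proof (including the deduction $\tilde{b}_{f}^{{\rm mero}}(s)=s+1$ from $\widetilde{\R}=0$ and the preceding factorization $\tilde{b}_{f}^{{\rm mero}}(s)=(s+1)\tilde{\beta}_{f}^{{\rm mero}}(s)$) goes through.
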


\begin{proof}
Let us consider the coherent $\D_X$-module 
\begin{equation}
\SH := 
\frac{ \D_X }{\sum_{i=1}^n \D_X h_i} 
\end{equation}
and its localization 
\begin{equation}
\widetilde{\SH}:= \SH \Bigl[ \frac{1}{G} \Bigr] 
\simeq 
\frac{ \widetilde{\D}_X }{\sum_{i=1}^n \widetilde{\D}_X h_i}. 
\end{equation}
Note that $\widetilde{\R}$ is a quotient 
of $\widetilde{\SH}$. Since $f= \frac{F}{G}$ 
is quasi-homogeneous of degree $d=d_1-d_2 
\not= 0$, there is no singular point of 
$f$ in $X \setminus (F^{-1}(0) \cup G^{-1}(0))$. 
Then by the smoothness of $f^{-1}(0)=F^{-1}(0) 
\setminus G^{-1}(0) 
\subset X \setminus G^{-1}(0)$, we have 
\begin{equation}
{\rm Sing} \ f = \{ x \in X \setminus G^{-1}(0) \ | \ 
h_1(x)=h_2(x)= \cdots =h_n(x)=0 \} = \emptyset. 
\end{equation}
This implies that the support of the coherent $\D_X$-module 
$\SH$ is contained in $G^{-1}(0) \subset X$. Then by 
Hilbert's nullstellensatz we get 
$\widetilde{\SH}=0$ and hence $\widetilde{\R}=0$. 
\end{proof}

By using \cite[Theorem 3.3 and Corollary 3.4]{N-T} we can 
construct many examples of $f= \frac{F}{G}$ 
satisfying the conditions in Proposition 
\ref{PROP-1} and having a monodromy eigenvalue 
$\not= 1$ at the point $x_0 \in X$. By Theorem \ref{Main-2} 
for such $f$ we thus obtain 
\begin{equation}
b_{f,m}^{{\rm mero}}(s) \not= 
\tilde{b}_{f}^{{\rm mero}}(s)=s+1 
\quad (m \geq 2 \dim X). 
\end{equation}
Namely, in the situation of Proposition 
\ref{PROP-1} for the reduced b-function 
$\tilde{b}_{f}^{{\rm mero}}(s)$ captures 
only the tiny (trivial) part $s+1$ of 
$b_{f,m}^{{\rm mero}}(s)$ for $m \geq 2 \dim X$.

\section{Multiplier ideals for meromorphic functions}\label{sec:s5}

In this section, we define multiplier ideal sheaves 
for the meromorphic function $f= \frac{F}{G}$ 
and study their basic properties. 
Recall that multiplier ideals for 
holomorphic functions were introduced by Nadel \cite{N}. 
For their precise properties, we refer to 
the excellent book \cite{L} by Lazarsfeld. 
For the meromorphic function $f= \frac{F}{G}$ 
we define them as follows. Denote by 
${\rm L}^1_{{\rm loc}}$ the set of locally 
integrable functions on $X$. 

\begin{definition}\label{Multi-ideal}
For a positive real number $\alpha >0$ we define 
an ideal $\II (X, f)_{\alpha} \subset \sho_X$ of 
$\sho_X$ by 
\begin{equation}
\II (X, f)_{\alpha}:= \Bigl\{ h \in \sho_X \ | \ 
\frac{|h|^{2}}{|f|^{2 \alpha}} = 
\frac{|h|^{2} \cdot |G|^{2 \alpha}}{|F|^{2 \alpha}} 
\in {\rm L}^1_{{\rm loc}} \Bigr\}
\end{equation}
and call it the multiplier ideal of $f$ 
of order $\alpha >0$. 
\end{definition}

Let $\pi : Y \longrightarrow X$ be a 
resolution of singularities 
of the divisor $D=
F^{-1}(0) \cup G^{-1}(0) \subset X$ as in 
Section \ref{sec:s4}. Here we assume moreover 
that the meromorphic function 
$g= \frac{F \circ \pi}{G \circ \pi}$ has no 
point of indeterminacy on the whole $Y$. 
Such a resolution $\pi : Y \longrightarrow X$ 
always exists. Let ${\rm div} \ g$ be the divisor on $Y$ 
defined by $g$. Then there exist two 
effective divisors $({\rm div} \ g)_+$ and 
$({\rm div} \ g)_-$ such that 
\begin{equation}
{\rm div} \ g = 
({\rm div} \ g)_+ - ({\rm div} \ g)_-. 
\end{equation}
By our assumption, their supports, which we 
denote by $g^{-1}(0)$ and $g^{-1}( \infty )$ 
respectively, are disjoint from each other. 
By using such a resolution $\pi : Y \longrightarrow X$, 
we can easily see that for $\alpha^{\prime} > \alpha >0$ 
we have $\II (X, f)_{\alpha^{\prime}} \subset 
\II (X, f)_{\alpha}$. Then, as in the case 
where $f$ is holomorphic, we can define 
the jumping numbers of the multiplier ideals 
$\{ \II (X, f)_{\alpha} \}_{\alpha > 0}$. 
In the situation as above, we have $g^{-1}( \infty ) \subset 
(G \circ \pi )^{-1}(0)$ but 
$g: Y \setminus (G \circ \pi )^{-1}(0) 
\longrightarrow \CC$ can be extended to a 
holomorphic function 
$\tilde{g} : Y \setminus g^{-1}( \infty )
\longrightarrow \CC$. Let 
\begin{equation}
\iota_{\tilde{g}} : Y \setminus g^{-1}( \infty )
\longrightarrow Y \times \CC \qquad 
(y \longmapsto (y, \tilde{g} (y)))
\end{equation} 
be the graph embedding defined by $\tilde{g}$. 
From now, we shall use the terminologies of 
mixed Hodge modules. For example, regarding 
the holonomic $\D_{X \times \CC}$-module $\M$ 
as a mixed Hodge module on $X \times \CC$, 
for $\alpha \in \QQ$ and $p \in \ZZ$ we set 
\begin{equation}
F_pV_{\alpha} \M = F_p \M \cap V_{\alpha} \M. 
\end{equation} 
We denote the normal crossing divisor 
$(G \circ \pi )^{-1}(0)$ in $Y$ by $E$ and 
consider the regular holonomic $\D_Y$-module 
$\sho_Y(*E)$ as a mixed Hodge module. 
Then its Hodge filtration $\{ F_p \sho_Y(*E) \}_{p \in \ZZ}$ 
satisfies the condition 
\begin{equation}
F_p \sho_Y(*E) \simeq 0 \quad (p<0), \qquad 
F_0 \sho_Y(*E) \not=0. 
\end{equation} 
Moreover $F_0 \sho_Y(*E) \subset \sho_Y(*E)$ is 
the subsheaf of $\sho_Y(*E)$ consisting of 
meromorphic functions on $Y$ having 
poles of order $\leq 1$ only along $E \subset Y$. 
See Mustata-Popa \cite[Chapter D]{M-P} for the details 
about the Hodge filtration of $\sho_Y(*E)$. 
We denote the restriction of 
$F_0 \sho_Y(*E) \simeq \sho_Y(E)$ to 
$Y \setminus g^{-1}( \infty )$ 
simply by $\E$. Then the following 
propositon can be proved just by following 
the arguments in Budur-Saito \cite{B-S} 
(see also \cite[Sections 3 and 4]{Budur-N} for more precise 
explanations). We set $Y^{\circ}:=
Y \setminus g^{-1}( \infty )$ and 
\begin{equation}
\varpi :=( \pi \times 
{\rm id}_{\CC} ) \circ \iota_{\tilde{g}}
 : Y^{\circ}
\longrightarrow X \times \CC \qquad 
(y \longmapsto ( \pi (y) , \tilde{g} (y))).
\end{equation}
Let $K_{Y/X}$ be the relative canonical 
divisor of $\pi : Y \longrightarrow X$. 

\begin{proposition}\label{PROP-2} 
Let $\alpha >0$ be a positive real number. 
Then for $0< \varepsilon \ll 1$ 
there exists an isomorphism 
\begin{equation}
F_1V_{- \alpha} \M \simeq \varpi_* 
\Bigl\{ \sho_{Y^{\circ}}(K_{Y/X}) \otimes 
\E \cap \sho_{Y^{\circ}} 
\Bigl(- \lfloor ( \alpha - \varepsilon ) ({\rm div} \ g)_+ 
\rfloor \Bigr) \Bigr\}. 
\end{equation} 
\end{proposition}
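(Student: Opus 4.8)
The plan is to reduce the statement, via the graph embedding $\iota_{\tilde g}$ and the resolution $\pi$, to a computation of the $V$-filtration and the Hodge filtration on the mixed Hodge module $\sho_Y(*E)$ restricted to $Y^\circ$, which is a normal-crossing situation and hence explicitly computable. First I would recall that, since $\tilde g:Y^\circ\to\CC$ is holomorphic, the push-forward $\bfD(\iota_{\tilde g})_*\sho_{Y^\circ}(*E)$ carries the standard graph-embedding description of nearby cycles, and that its $V$-filtration along $\{t=0\}$ is the Malle-Kashiwara one induced by the divisor $({\rm div}\,g)_+=\tilde g^{-1}(0)$; concretely, for $\alpha>0$, $V_{-\alpha}$ of the graph module corresponds on the level of the underlying $\sho$-module to the subsheaf with poles bounded by $\lfloor(\alpha-\varepsilon)({\rm div}\,g)_+\rfloor$ for $0<\varepsilon\ll1$. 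This is exactly the content of Budur–Saito \cite{B-S}; the only point to check is that their argument, stated for holomorphic functions on a smooth variety, applies verbatim once one has arranged (as we have, by the choice of $\pi$) that $g$ is a morphism to $\CC$ near $E$ and that $g^{-1}(0)$ and $g^{-1}(\infty)$ are disjoint normal-crossing divisors.

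The next step is to transport this computation from $Y^\circ$ to $X\times\CC$. I would use the fact, established in the proof of Theorem \ref{Main-3} (and implicit in the construction of $\M$), that $\M$ is the $0$-th direct image $H^0\bfD\varpi_*$ of the regular holonomic $\D_{Y^\circ}$-module associated to $g$, i.e. of $\bfD(\iota_{\tilde g})_*\sho_{Y^\circ}(*E)$ — here one uses that $\pi$ is an isomorphism off $D$ and Hilbert's Nullstellensatz to see that localizing along $G^{-1}(0)$ changes nothing. Under a projective (or at least proper) direct image of mixed Hodge modules, $F_pV_\alpha$ is strict and commutes with the push-forward in the appropriate sense; applying this with $p=1$ and the shift by the relative canonical divisor $K_{Y/X}$ coming from the $\Omega^n_{Y\times X/X}$-twist in the transfer module (exactly as in the passage from $\tau$ to $\sigma'$ in the proof of Theorem \ref{Main-3}) yields
\[
F_1V_{-\alpha}\M\simeq\varpi_*\Bigl(\sho_{Y^\circ}(K_{Y/X})\otimes F_1V_{-\alpha}\bigl[\,(\iota_{\tilde g})_*\sho_{Y^\circ}(*E)\,\bigr]\Bigr).
\]
Combining this with the normal-crossing computation of $F_1V_{-\alpha}$ of the graph module — which on $Y^\circ$ is $F_0\sho_Y(*E)|_{Y^\circ}=\E$ intersected with the pole-order bound $\sho_{Y^\circ}(-\lfloor(\alpha-\varepsilon)({\rm div}\,g)_+\rfloor)$ — gives precisely the claimed formula.

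The main obstacle I expect is the interplay of the two filtrations under the direct image: one must verify that $F_1$ of the Hodge filtration is exactly the piece that survives (no higher $F_p$ contributions to $V_{-\alpha}$ for the relevant $\alpha$'s, which is why the jumping-number story only sees $F_0\sho_Y(*E)=\sho_Y(E)$), and that strictness holds so that $H^0\bfD\varpi_*$ computes the filtered piece naively rather than only up to a spectral-sequence ambiguity. This is where the references \cite{B-S} and \cite{Budur-N} do the real work; in our setting the extra care needed is (i) that $E=(G\circ\pi)^{-1}(0)$ may strictly contain $g^{-1}(\infty)$, so one genuinely works on $Y^\circ=Y\setminus g^{-1}(\infty)$ and uses $\sho_Y(*E)|_{Y^\circ}$ rather than $\sho_Y(*g^{-1}(\infty))$, and (ii) that $\varpi$ is only quasi-projective, so properness must be invoked via the properness of $\pi$ together with the closedness of the graph of $\tilde g$. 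Once these points are settled, the isomorphism follows by assembling the local normal-crossing model with the general direct-image compatibility, and the proof is complete. \fin
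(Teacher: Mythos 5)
Your overall route is the same as the paper's: the paper offers no independent argument for Proposition \ref{PROP-2} beyond the instruction to follow Budur--Saito \cite{B-S}, and your two pillars --- the explicit computation of $F_pV_{\alpha}$ for the graph module of $\tilde{g}$ in the normal-crossing situation on $Y^{\circ}$, and the bistrictness of $(F,V)$ under the proper direct image $\varpi$ with the $K_{Y/X}$-twist coming from the transfer module --- are exactly the two ingredients of that argument. Your remarks on the properness of $\varpi$ (closedness of the graph of $\tilde{g}$ plus properness of $\pi$) and on working with $\sho_Y(*E)|_{Y^{\circ}}$ rather than $\sho_Y(*g^{-1}(\infty))$ are also the right technical caveats.

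There is, however, one concrete gap. You assert that $\M \simeq H^0 \bfD \varpi_*$ of the graph module of $\tilde{g}$ on $Y^{\circ}$, and that this ``was established in the proof of Theorem \ref{Main-3}.'' Neither is accurate. The proof of Theorem \ref{Main-3} only constructs a morphism $\Psi : \M^{\prime} = H^0 \bfD \widetilde{\pi}_* \N_0 \longrightarrow \M$ of plain (unfiltered) $\D_{X \times \CC}$-modules, for the submodule $\N_0 = \D_{Y \times \CC}\tau$ on all of $Y \times \CC$, and shows $\Psi(\sigma^{\prime}) = \sigma$; it establishes no isomorphism. More importantly, the claimed isomorphism is false in general: $\pi$ resolves the full divisor $D = F^{-1}(0) \cup G^{-1}(0)$ and therefore typically modifies $F^{-1}(0) \setminus G^{-1}(0)$, over which $\varpi$ has positive-dimensional fibres; by the decomposition theorem, $H^0 \bfD \varpi_*$ then contains $\M$ only as a direct factor, with complementary summands supported over $(F^{-1}(0) \setminus G^{-1}(0)) \times \{0\}$. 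Consequently $\varpi_*\bigl\{ \sho_{Y^{\circ}}(K_{Y/X}) \otimes \E \cap \sho_{Y^{\circ}}(- \lfloor (\alpha - \varepsilon)({\rm div}\, g)_+ \rfloor)\bigr\}$ a priori computes $F_1 V_{-\alpha}$ of the whole pushforward, not of $\M$. To reach the stated equality you must (i) justify the direct-factor decomposition compatibly with both filtrations (decomposition theorem for $\bfD \pi_* \sho_Y$, then localize along $G$ and pass to the graph modules), and (ii) show that $F_1 V_{-\alpha}$ of the complementary summands vanishes, e.g.\ because their Hodge filtrations start in strictly higher degree, these summands being supported on proper subvarieties. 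This is precisely the part of \cite{B-S} that does not transplant verbatim to the meromorphic setting and where the real work of the adaptation lies; your proposal names the strictness issue but not this one.
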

Now let ${\rm pr}_X: X \times \CC \longrightarrow 
X$ be the projection. Then there exists an 
injective homomrphism of sheaves 
\begin{equation}
\gamma : \sho_X  \longrightarrow ({\rm pr}_X)_* \M 
\qquad (h \longmapsto h \cdot \sigma_0), 
\end{equation} 
by which we regard $\sho_X$ as a subsheaf of 
$({\rm pr}_X)_* \M$. By Proposition 
\ref{PROP-2} and the local 
integrability condition in Definition \ref{Multi-ideal}, 
we obtain the following analogue for 
meromorphic functions of 
Budur-Saito \cite[Theorem (0.1)]{B-S} 
(see also \cite{Budur-N} for the details). 
Note that by Proposition \ref{PROP-2} 
for any $\alpha >0$ we have 
\begin{equation}
\sho_X \cap ({\rm pr}_X)_* 
F_1V_{- \alpha} \M = 
\sho_X \cap ({\rm pr}_X)_* 
V_{- \alpha} \M.
\end{equation} 

\begin{theorem}\label{Main-5} 
Let $\alpha >0$ be a positive real number. 
Then we have 
\begin{equation}
\II (X, f)_{\alpha} = \sho_X \cap ({\rm pr}_X)_* 
V_{< - \alpha} \M = 
\{ h \in \sho_X \ | \ 
{\rm ord}_{\{ t=0 \}}(h \cdot \sigma_0) > 
\alpha -1 \}. 
\end{equation} 
\end{theorem}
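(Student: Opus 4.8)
The plan is to reduce the statement to Proposition \ref{PROP-2} together with the characterization of the $V$-filtration in terms of $\mathrm{ord}_{\{t=0\}}$, and then to unwind Definition \ref{Multi-ideal} through the resolution $\pi$. First I would establish the second equality, which is essentially a reformulation: by the definition of $V_{\alpha}(\M)$ in Section \ref{sec:s3}, a section $h\cdot\sigma_0$ lies in $V_{<-\alpha}\M = \bigcup_{\beta<-\alpha}V_{\beta}\M$ precisely when $\mathrm{ord}_{\{t=0\}}(h\cdot\sigma_0) > \alpha-1$ (using the lexicographic convention and that the orders live in a discrete set $A+\ZZ$). So the content is entirely in identifying $\II(X,f)_{\alpha}$ with $\sho_X \cap ({\rm pr}_X)_* V_{<-\alpha}\M$.

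For that identification I would argue as follows. By Proposition \ref{PROP-2}, for $0<\varepsilon\ll1$ we have $({\rm pr}_X)_* F_1V_{-\alpha}\M \simeq \pi_*\{\sho_{Y^\circ}(K_{Y/X})\otimes \E \cap \sho_{Y^\circ}(-\lfloor(\alpha-\varepsilon)({\rm div}\,g)_+\rfloor)\}$, and by the remark preceding the theorem $\sho_X \cap ({\rm pr}_X)_* F_1V_{-\alpha}\M = \sho_X \cap ({\rm pr}_X)_* V_{-\alpha}\M$; passing to the strict inequality (i.e. replacing $V_{-\alpha}$ by $V_{<-\alpha}$) amounts to replacing $\lfloor(\alpha-\varepsilon)({\rm div}\,g)_+\rfloor$ by $\lfloor \alpha\,({\rm div}\,g)_+\rfloor$ in the bound, since near the jumping values the floor stabilizes for $\varepsilon$ small. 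Thus a germ $h\in\sho_X$ lies in $\sho_X\cap({\rm pr}_X)_* V_{<-\alpha}\M$ iff, after pulling back to $Y^\circ$, the section $\pi^*h$ (viewed inside the rank-one module generated over $Y^\circ$) lies in the subsheaf cut out by vanishing to order $\geq \lfloor\alpha\,({\rm div}\,g)_+\rfloor$ along $g^{-1}(0)$, twisted by $K_{Y/X}$. The remaining step is to match this divisorial condition with the $\mathrm L^1_{\mathrm{loc}}$ condition. Writing $g = F\circ\pi / G\circ\pi$ and using the change of variables formula for $\pi$, local integrability of $|h|^2|G|^{2\alpha}/|F|^{2\alpha}$ on $X$ is equivalent, by a standard monomial computation in coordinates where $({\rm div}\,g)_+$ and $K_{Y/X}$ are normal crossing, to $\pi^*h$ vanishing along each component $E_i$ of $g^{-1}(0)$ to order $> \alpha\,a_i - \kappa_i$, i.e. to order $\geq \lfloor \alpha\,a_i\rfloor - \kappa_i + 1$, where $a_i = \mathrm{ord}_{E_i}({\rm div}\,g)_+$ and $\kappa_i = \mathrm{ord}_{E_i}K_{Y/X}$. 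This is exactly the divisorial membership condition above, once the $\sho_{Y^\circ}(K_{Y/X})$ twist is accounted for; note that the components of $g^{-1}(\infty)$ contribute nothing because there $|G\circ\pi|^{2\alpha}$ is harmless (indeed beneficial) and $\pi_*$ over $Y^\circ$ only sees $\sho_Y(E)$, whose pole of order $\leq 1$ along $E$ is absorbed. Then $h\in\II(X,f)_{\alpha}$ iff $h\in \sho_X\cap({\rm pr}_X)_* V_{<-\alpha}\M$, as desired.

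The main obstacle I expect is bookkeeping at the boundary case, i.e. correctly matching the strict inequality in the integrability condition with the (half-open) convention defining $V_{<-\alpha}$ and with the floor function appearing in Proposition \ref{PROP-2}; one has to be careful that $|z|^{-2\beta}$ is locally integrable in one complex variable iff $\beta<1$, which is what forces the strict inequalities and the "$+1$" in the order bound, and this has to be threaded consistently through the twist by $K_{Y/X}$ (the Jacobian of $\pi$) and through the choice of $\varepsilon$. A secondary point is to check that the identification is compatible with the embedding $\gamma$, so that "$h\in\sho_X$ with $h\cdot\sigma_0\in V_{<-\alpha}\M$" genuinely computes the stalk of the multiplier ideal and not merely a sheaf containing it; this follows from Proposition \ref{PROP-2} being an isomorphism of sheaves (not just an inclusion) together with flatness/faithful flatness of $\sho_{X\times\CC}|_{\{t=0\}}$ over $\sho_X\otimes_{\CC_X}\CC_X[t]$ as used in Section \ref{sec:s2}. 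Modulo these compatibilities, the theorem follows formally from Proposition \ref{PROP-2} and the definition of the $V$-filtration.
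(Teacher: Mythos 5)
Your overall route is exactly the paper's: the second equality is a tautology from the definition of $V_{<-\alpha}(\M)$ via ${\rm ord}_{\{t=0\}}$, and the first equality is obtained by combining Proposition \ref{PROP-2} (together with the remark $\sho_X \cap ({\rm pr}_X)_* F_1V_{-\alpha}\M = \sho_X \cap ({\rm pr}_X)_* V_{-\alpha}\M$) with the standard normal-crossing computation of the ${\rm L}^1_{\rm loc}$ condition through the log resolution, exactly as in Budur--Saito; the paper itself gives no more detail than this, so your write-up is at least as complete as the source.

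One concrete slip to fix, precisely at the boundary bookkeeping you flag as the delicate point: after the change of variables the integrand near a component $E_i$ of $g^{-1}(0)$ behaves like $|y_i|^{2(\nu_i+\kappa_i-\alpha a_i)}$ with $\nu_i={\rm ord}_{E_i}(\pi^*h)$, so local integrability requires $\nu_i+\kappa_i-\alpha a_i>-1$, i.e. $\nu_i>\alpha a_i-\kappa_i-1$, i.e. $\nu_i\geq\lfloor\alpha a_i\rfloor-\kappa_i$. You state the threshold as $\nu_i>\alpha a_i-\kappa_i$, hence $\nu_i\geq\lfloor\alpha a_i\rfloor-\kappa_i+1$, which is too strong by one; as written it does not match the divisorial condition $\sho_{Y^{\circ}}(K_{Y/X})\otimes\sho_{Y^{\circ}}(-\lfloor\alpha\,({\rm div}\,g)_+\rfloor)$ coming from Proposition \ref{PROP-2} with $\varepsilon$ removed, so your final ``this is exactly the divisorial membership condition above'' is internally inconsistent. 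With the corrected threshold the identification goes through, and the rest of your discussion (the harmlessness of $g^{-1}(\infty)$, the role of $\E$, and the compatibility with the embedding $\gamma$) is sound.
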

By this theorem and 
\begin{equation}
{\rm ord}_{\{ t=0 \}}(h \cdot \sigma_0) \subset 
\bigcup_{i=0,1,2, \ldots}
\Bigl\{ {\rm ord}_{\{ t=0 \}}( \sigma_0) +i \Bigr\} 
\qquad (h \in \sho_X)
\end{equation} 
(see the proof of Lemma \ref{Lemma-4}), 
we immediately obtain 
the following generalization of 
the celebrated theorem of Ein-Lazarsfeld-Smith-Varolin 
\cite{E-L-S-V} to meromorphic functions. 

\begin{corollary}\label{Cor-jump} 
The jumping numbers of the multiplier ideals 
$\{ \II (X, f)_{\alpha} \}_{\alpha >0}$ are 
contained in the set 
\begin{equation}
\bigcup_{i=0,1,2, \ldots}
\Bigl\{ -(b_{f,0}^{{\rm mero}})^{-1}(0)+i \Bigr\}
\subset \QQ_{>0}. 
\end{equation}
Moreover the minimal jumping number $\alpha >0$ is 
equal to the negative of the largest root 
of $b_{f,0}^{{\rm mero}}(s)$. 
\end{corollary}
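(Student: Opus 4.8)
The strategy is to read off both assertions directly from Theorem \ref{Main-5}, which identifies the multiplier ideal $\II(X,f)_\alpha$ with $\sho_X \cap ({\rm pr}_X)_* V_{<-\alpha}\M$, together with the relation $b_{f,0}^{{\rm mero}}(s) = p_{\sigma_0}(-s-1)$ established in the proof of Theorem \ref{Main-2}, which yields $(b_{f,0}^{{\rm mero}})^{-1}(0) = \{-\lambda-1 \ | \ \lambda \in {\rm ord}_{\{t=0\}}(\sigma_0)\}$. First I would recall that $\alpha_0>0$ is a jumping number precisely when $\II(X,f)_{\alpha_0} \subsetneq \II(X,f)_{\alpha}$ for all $\alpha < \alpha_0$, i.e. when the function $\alpha \mapsto \II(X,f)_\alpha$ is strictly discontinuous at $\alpha_0$. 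By Theorem \ref{Main-5} this happens exactly when there is a germ $h \in \sho_X$ with ${\rm ord}_{\{t=0\}}(h\cdot\sigma_0) > \alpha_0-1$ failing to satisfy ${\rm ord}_{\{t=0\}}(h\cdot\sigma_0) > \alpha-1$ for some $\alpha$ slightly below $\alpha_0$; in other words, $\alpha_0-1$ must itself be an element of the discrete set $\bigcup_h {\rm ord}_{\{t=0\}}(h\cdot\sigma_0)$.

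Next I would invoke the inclusion ${\rm ord}_{\{t=0\}}(h\cdot\sigma_0) \subset \bigcup_{i=0,1,2,\ldots}\{{\rm ord}_{\{t=0\}}(\sigma_0)+i\}$, valid for every $h \in \sho_X$ (this is exactly the Artin--Rees argument from the proof of Lemma \ref{Lemma-4}, applied to the module $\D_{X\times\CC}\sigma_0$ and the section $h\cdot\sigma_0 \in \sho_X\sigma_0 \subset V_0(\D_{X\times\CC})\sigma_0$). Hence every jumping number $\alpha_0$ satisfies $\alpha_0-1 \in \lambda+i$ for some $\lambda \in {\rm ord}_{\{t=0\}}(\sigma_0)$ and some integer $i \geq 0$, that is $\alpha_0 = \lambda+1+i$. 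Writing $\lambda+1 = -\beta$ with $\beta \in -{\rm ord}_{\{t=0\}}(\sigma_0)-1 = -(b_{f,0}^{{\rm mero}})^{-1}(0)$ (up to sign conventions that need to be tracked carefully), one gets $\alpha_0 \in \bigcup_{i\geq 0}\{-(b_{f,0}^{{\rm mero}})^{-1}(0)+i\}$, as claimed. The containment in $\QQ_{>0}$ follows since jumping numbers are positive by definition and rational by Theorem \ref{Main-3} (or by the resolution description of $\M$).

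For the final sentence, I would argue that the minimal jumping number $\alpha_{\min}$ corresponds to the largest element of ${\rm ord}_{\{t=0\}}(\sigma_0)$ with the smallest shift. Concretely: for $h=1$ we have $h\cdot\sigma_0 = \sigma_0$, so $\alpha = {\rm ord}_{\{t=0\}}(\sigma_0)$ already contributes jumps at $\alpha = \lambda+1$ for $\lambda \in {\rm ord}_{\{t=0\}}(\sigma_0)$; the smallest such positive value is $\lambda_{\max}+1$ where $\lambda_{\max}$ is the largest root of $p_{\sigma_0}$. Meanwhile no smaller positive jumping number can occur, because any $h$ with $h\cdot\sigma_0 \notin V_{<-\alpha}\M$ for small $\alpha$ forces $1 = h\cdot 1 \notin \II(X,f)_\alpha$, and $1 \in \II(X,f)_\alpha$ for all sufficiently small $\alpha>0$ precisely up to $\alpha = \lambda_{\max}+1$. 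Translating via $b_{f,0}^{{\rm mero}}(s) = p_{\sigma_0}(-s-1)$, the largest root $\lambda_{\max}$ of $p_{\sigma_0}$ corresponds to the largest root $s_{\max} = -\lambda_{\max}-1$ of $b_{f,0}^{{\rm mero}}$, so $\alpha_{\min} = \lambda_{\max}+1 = -s_{\max}$, the negative of the largest root.

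The main obstacle I anticipate is purely bookkeeping: keeping the sign conventions consistent between the $V$-filtration indexing ($V_\alpha\M$ defined via ${\rm ord}_{\{t=0\}}(\sigma)\geq -\alpha-1$), the correspondence $s \leftrightarrow -\theta-1$, and the convention that jumping numbers are positive reals; a single off-by-one or sign slip collapses the whole statement. The one genuinely non-formal point is verifying that for the minimal jumping number it is the section $\sigma_0$ itself (i.e. $h=1$), rather than some other $h \in \sho_X$ with a strictly larger order, that realizes the jump --- this is where one uses that $1 \in \sho_X$ and that $1 \in \II(X,f)_\alpha$ exactly when ${\rm ord}_{\{t=0\}}(\sigma_0) > \alpha-1$, together with the fact that the largest element of ${\rm ord}_{\{t=0\}}(\sigma_0)$ cannot be exceeded by ${\rm ord}_{\{t=0\}}(h\cdot\sigma_0)$ for the purpose of producing a smaller jump.
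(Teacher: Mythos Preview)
Your approach is exactly the one the paper takes: the corollary is deduced immediately from Theorem~\ref{Main-5} together with the inclusion
\[
{\rm ord}_{\{t=0\}}(h\cdot\sigma_0)\subset\bigcup_{i\geq 0}\bigl\{{\rm ord}_{\{t=0\}}(\sigma_0)+i\bigr\}\qquad(h\in\sho_X),
\]
which the paper also attributes to the Artin--Rees argument in the proof of Lemma~\ref{Lemma-4}, and the identification $b_{f,0}^{\rm mero}(s)=p_{\sigma_0}(-s-1)$ from the proof of Theorem~\ref{Main-2}.

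That said, you have committed precisely the sign slips you warned yourself about. First, in your unpacking of the jumping condition you wrote that the witnessing $h$ satisfies ${\rm ord}_{\{t=0\}}(h\cdot\sigma_0)>\alpha_0-1$ but fails the analogous inequality for $\alpha$ slightly \emph{below} $\alpha_0$; this is backwards, since by Theorem~\ref{Main-5} membership in $\II(X,f)_\alpha$ is the condition ${\rm ord}_{\{t=0\}}(h\cdot\sigma_0)>\alpha-1$, so the witness satisfies the inequality for $\alpha<\alpha_0$ and fails it at $\alpha_0$, forcing $\min\bigl({\rm ord}_{\{t=0\}}(h\cdot\sigma_0)\bigr)=\alpha_0-1$. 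Second, in the last paragraph you claim the minimal jumping number equals $\lambda_{\max}+1$ with $\lambda_{\max}$ the largest root of $p_{\sigma_0}$, and that this $\lambda_{\max}$ corresponds to the largest root of $b_{f,0}^{\rm mero}$. Both are wrong: since $1\in\II(X,f)_\alpha$ iff $\min\bigl({\rm ord}_{\{t=0\}}(\sigma_0)\bigr)>\alpha-1$, the first jump occurs at $\lambda_{\min}+1$; and the map $\lambda\mapsto -\lambda-1$ is order-reversing, so $\lambda_{\min}$ corresponds to the largest root $s_{\max}$ of $b_{f,0}^{\rm mero}$. Your two errors cancel to give the correct formula $\alpha_{\min}=-s_{\max}$, but the intermediate reasoning should be corrected.
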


\end{document}